\newtheorem{thm}{Theorem}[section]
\newtheorem{main}{Theorem}
\newtheorem{prop}[thm]{Proposition}
\newtheorem{lem}[thm]{Lemma}
\theoremstyle{definition}
\newtheorem{defn}[thm]{Definition}
\newtheorem*{defn*}{Definition}
\newcommand{\Q}{\mathbb{Q}}
\newcommand{\pp}{\mathbb{P}}
\newcommand{\kk}{\mathbb{K}}
\newcommand{\zz}{\mathbb{Z}}
\newcommand{\ext}{\mathcal{E}xt}
\newcommand{\oo}{\mathcal{O}}
\newcommand{\dd}{\mathcal{D}}
\newcommand{\Ss}{\mathcal{S}}
\begin{document}

\title{Euler sequence for complete smooth $\kk^*$-surfaces}

\author[A.~Laface]{Antonio Laface}
\address{
Departamento de Matem\'atica,
Universidad de Concepci\'on,
Casilla 160-C,
Concepci\'on, Chile}
\email{alaface@udec.cl}

\author[M.~Melo]{Manuel Melo}
\address{
Departamento de Matem\'atica,
Universidad de Concepci\'on,
Casilla 160-C,
Concepci\'on, Chile}
\email{manuelmelo@udec.cl}

\maketitle

\begin{abstract}
In this note we introduce exact sequences of sheaves
on a complete smooth  $\kk^*$-surface without elliptic points.
These sequences are an attempt to generalize
the Euler sequence for a toric variety to  
complexity one surfaces.
As an application we show that such a surface
is rigid if and only if it is Fano.
\end{abstract}

\section*{Introduction}
In \cite[V.8.13]{Har}, we are presented with an exact sequence, namely
$$0\longrightarrow \Omega_{\pp^n}\longrightarrow\oo_{\pp^n}(-1)^{n+1}
\longrightarrow \oo_{\pp^n}\longrightarrow 0,$$
which allows the author to make several computations regarding differentials.
This sequence is named {\it Euler sequence} and in \cite{CLS} it is
generalized to any smooth toric variety $X$ coming from a fan $\Sigma$ whose
rays span the whole ambient lattice.
The exact sequence in question is
$$0\longrightarrow \Omega_{X}\longrightarrow\bigoplus_{\rho\in\Sigma(1)}\oo_{\pp^n}(-D_\rho)
\longrightarrow {\rm Pic}(X)\otimes_\zz\oo_{X}\longrightarrow 0,$$
where $D_\rho$ is the invariant divisor associated to the ray $\rho$.

This work attempts to obtain a similar
result in the case of $\kk^*$-surfaces.

We briefly recall that a $\kk^*$-surface $X$ without elliptic
points (see Definition~\ref{elliptic})
is equipped with an equivariant morphism
$\pi\colon X\to Y$, onto a smooth projective
curve $Y$, which admits two distinguished
sections called the source $F^+$ and the sink 
$F^-$ of $X$. This allows us to form the 
divisor $E_{\mathcal S}=mF-\sum_{i\in I}E_i$, where
$m$ is the number of reducible fibers of $\pi$
and $\{E_i : i\in I\}$ is the set of the prime components
of such fibers. 

Our main theorem is the following.

\begin{main}\label{main1}
Let $X$ be a complete smooth  $\kk^*$-surface without elliptic points.
There is an exact sequence of 
$\mathcal{O}_X$-modules\\
\[
 \begin{array}{l}
 \xymatrix{
  0\ar[r] & 
  \pi^*\Omega_{ Y}\otimes_{\mathcal{O}_X}\mathcal{O}_X(E_{\mathcal{S}})
  \ar[r] &
  \Omega_{X}\ar[r] & 
  \mathcal{G}\ar[r] & 
  \oo_X^{r+1}\ar[r] & 0
  }
 \end{array}
\]
where $r=|I|-m$ and $\mathcal G$
is the quotient of $\oplus_{i\in I}\oo_X(E_i)\oplus\oo_X(F^+)\oplus\oo_X(F^-)$
by the subsheaf $\oplus_{i,j}\oo_X(-E_i-E_j)$,
where the sum is taken over all the pairs
$(i,j)$ such that $E_i\cap E_j\neq\emptyset$.

\end{main}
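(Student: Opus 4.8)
The plan is to obtain the four-term sequence by splicing the (saturated) relative cotangent sequence of $\pi$ with a Koszul-type resolution built from the canonical sections of the invariant divisors $F^+,F^-$ and the fibre components $E_i$. I would begin from the right-exact relative cotangent sequence $\pi^*\Omega_Y\to\Omega_X\to\Omega_{X/Y}\to 0$. Since $\pi$ is dominant and $\pi^*\Omega_Y$ is a line bundle on the integral surface $X$, the left-hand arrow is injective as a map of $\oo_X$-modules; what must be pinned down is its saturation $\widetilde{\mathcal I}$ inside the locally free sheaf $\Omega_X$, together with the torsion-free quotient $\mathcal Q:=\Omega_X/\widetilde{\mathcal I}$.

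The identification $\widetilde{\mathcal I}=\pi^*\Omega_Y\otimes\oo_X(E_{\mathcal S})$ is local, and is the computational core. Away from the reducible fibres $\pi$ is smooth and $\pi^*\Omega_Y\hookrightarrow\Omega_X$ is already a subbundle. At a node $p=E_i\cap E_j$, i.e.\ a hyperbolic fixed point, the $\kk^*$-action linearises with weights $(a,-b)$, the invariant function defining $\pi$ is the monomial $x^by^a$ in suitable coordinates, and $E_i,E_j$ are the two axes, entering the fibre with multiplicities $b,a$. Then $\pi^*(dt)=x^{b-1}y^{a-1}(by\,dx+ax\,dy)$, so passing to the saturation removes the codimension-one factor $x^{b-1}y^{a-1}$. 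Globally this twists $\pi^*\Omega_Y$ by $\sum_i(m_i-1)E_i$, with $m_i$ the multiplicity of $E_i$ in its fibre; since $\sum_i m_iE_i$ is the sum of the $m$ reducible fibres, this divisor is linearly equivalent to $mF-\sum_iE_i=E_{\mathcal S}$. This gives the first map and shows that $\mathcal Q$ is torsion-free of rank one, locally free away from the nodes.

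It then remains to produce a presentation $0\to\mathcal Q\to\mathcal G\to\oo_X^{\,r+1}\to 0$. Each invariant divisor $D\in\{E_i,F^+,F^-\}$ carries a canonical section $s_D$; multiplication by $s_D$ defines $\oo_X(-D)\to\oo_X$, and at a node the pair $(s_i,s_j)$ is a regular sequence, yielding the local Koszul relation recorded by the summand $\oo_X(-E_i-E_j)$ in the definition of $\mathcal G$. Assembling the $s_D$ into a map onto a trivial bundle and matching it with the local models of $\mathcal Q$ — on the smooth locus this is just the Koszul complex of $(s_{F^+},s_{F^-})$ resolving $\omega_{X/Y}=\oo_X(-F^+-F^-)$, while along a chain the $E_i$ interpolate between the two sections — should identify $\mathcal Q$ with the kernel and exhibit the cokernel as a trivial bundle whose rank is dictated by the chain combinatorics of the reducible fibres. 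Splicing this with the sequence of the previous paragraph produces the asserted four-term sequence.

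The main obstacle is the local analysis at the hyperbolic fixed points. One must verify that the two halves of the argument are compatible there — the same node controls both the saturation jump $m_i-1$ and the Koszul relation $\oo_X(-E_i-E_j)$ — that the section-defined maps glue to genuine morphisms of $\oo_X$-modules across a $\kk^*$-invariant affine cover, and, crucially, that after dividing by the node relations the cokernel is \emph{free} of the claimed rank rather than merely torsion-free. It is precisely at the nodes, where $\mathcal Q$ fails to be locally free and $\mathcal G$ is most degenerate, that the combinatorics measured by $r=|I|-m$ intervenes, and I expect this exactness check to demand the most care.
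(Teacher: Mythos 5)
Your first half is essentially the paper's own argument for the left-hand map (Lemma~2.2 there): the paper also works chart by chart on the divisorial fan, computes $dx=z^{b_1-1}w^{b_2-1}(b_1w\,dz+b_2z\,dw)$ at a hyperbolic point and $dx=z^{b-1}w^{l-1}(bw\,dz+lz\,dw)$ along a component of multiplicity $b$, and identifies the saturated image of $\pi^*\Omega_Y$ in $\Omega_X$ as $\pi^*\Omega_Y\otimes\oo_X(E_{\mathcal S})$ with $E_{\mathcal S}=\sum(b_{i,j}-1)E_{i,j}$. That part of your proposal is correct and matches the paper.

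The gap is in the second half, and you have in fact pointed at it yourself. You never construct a global morphism from $\mathcal{Q}$ to $\mathcal{G}$: matching $\mathcal{Q}$ with kernels of section maps chart by chart does not suffice, since such local identifications are only defined up to units and need not glue, and it gives you no control of the cokernel --- which is why you are left ``expecting'' the freeness of $\oo_X^{r+1}$ rather than proving it. The missing idea is the paper's map $\alpha\colon\Omega_X\to\oo_X$, $f\,dz\mapsto\deg(z)fz$ for homogeneous local coordinates, i.e.\ contraction with the Euler vector field generating the $\kk^*$-action. This globally defined map has kernel exactly the saturated subsheaf you computed, and its image is locally $(z,w)=I_p$ at a hyperbolic point $p$, $(w)=I_{F^\pm}$ along the parabolic curves, and the unit ideal elsewhere; hence $\mathcal{Q}\cong\operatorname{im}(\alpha)=I_{F^+\sqcup F^-\sqcup Z}$, sitting in the four-term sequence $0\to\pi^*\Omega_Y\otimes\oo_X(E_{\mathcal S})\to\Omega_X\to\oo_X\to\oo_{F^+}\oplus\oo_{F^-}\oplus\oo_Z\to 0$. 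Once this is in hand, the presentation $0\to\mathcal{Q}\to\mathcal{G}\to\oo_X^{r+1}\to 0$ requires no delicate local exactness check at all: one compares the row $0\to\mathcal{Q}\to\oo_X\to\mathcal{F}_X\to 0$ with the direct sum of the standard sequences $0\to\oo(-F^\pm)\to\oo_X\to\oo_{F^\pm}\to 0$ and, for each of the $r$ nodes, Beauville's sequence $0\to\oo(-E_i-E_j)\to\oo(-E_i)\oplus\oo(-E_j)\to\oo_X\to\oo_p\to 0$ (with the first two terms replaced by their quotient), in a nine-lemma diagram whose middle column is $0\to\oo_X\xrightarrow{(1,\ldots,1)}\oo_X^{r+2}\to\oo_X^{r+1}\to 0$. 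The trivial cokernel of rank $r+1$ then falls out of the snake lemma; your Koszul relations at the nodes are the right local ingredients, but without $\alpha$ (or an equivalent global realization of $\mathcal{Q}$ as the ideal sheaf of $F^+\sqcup F^-\sqcup Z$) the assembly step of your plan does not go through.
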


The paper is organized as follows.
In Section~\ref{sec:1} we introduce
the language of polyhedral divisors
detailed in \cite{AH}. 
Section~\ref{sec:2} is devoted to the construction of two
exact sequences needed to prove Theorem~\ref{main1}.
Because of the method employed
to prove them, we restrict our attention to
complete smooth  $\kk^*$-surfaces without elliptic points. 
Finally, in Section~\ref{sec:3}, as an application, 
we show that the only 
rigid rational $\kk^*$-surfaces without elliptic points
are Fano. This is a partial generalization
of~\cite[Corollary 2.8]{Il} where the case of
toric surfaces is considered.

\begin{main}\label{main2}
Let $X$ be a complete smooth $\kk^*$-surface
without elliptic points. Then the following are
equivalent:
\begin{enumerate}
\item
The equality $h^1(X,T_X)=0$ holds;
\item
$X$ is toric Fano.
\end{enumerate}
\end{main}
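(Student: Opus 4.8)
The plan is to feed the Euler sequence of Theorem~\ref{main1} into the computation of $h^1(X,T_X)$ and to read off, from the vanishing of this number, a short list of constraints that single out the toric Fano case. Since $X$ is a smooth surface, $T_X$ is the dual of the rank two bundle $\Omega_X$, hence $T_X\cong\Omega_X\otimes\omega_X^{-1}$ with $\omega_X=\det\Omega_X$. Tensoring by the invertible sheaf $\omega_X^{-1}$ preserves exactness, so Theorem~\ref{main1} gives
\[
0\longrightarrow \pi^*\Omega_{Y}\otimes\oo_X(E_{\mathcal S})\otimes\omega_X^{-1}\longrightarrow T_X\longrightarrow \mathcal G\otimes\omega_X^{-1}\longrightarrow (\omega_X^{-1})^{r+1}\longrightarrow 0 .
\]
I would split this into the two short exact sequences obtained by inserting the sheaf $\mathcal N=\ker\!\big(\mathcal G\otimes\omega_X^{-1}\to(\omega_X^{-1})^{r+1}\big)=\operatorname{im}(T_X\to\mathcal G\otimes\omega_X^{-1})$ and run the associated long exact cohomology sequences. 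In this way $h^1(X,T_X)$ is controlled by the cohomology of the leftmost invertible sheaf, of $(\omega_X^{-1})^{r+1}$, and of $\mathcal G\otimes\omega_X^{-1}$; the last of these I would compute from the presentation of $\mathcal G$ as a quotient of $\bigoplus_i\oo_X(E_i)\oplus\oo_X(F^+)\oplus\oo_X(F^-)$, reducing everything to cohomology of line bundles supported on the sections $F^\pm$ and on the fiber components $E_i$, which is accessible through $\pi$ and the Leray spectral sequence.

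For $(2)\Rightarrow(1)$ I would invoke the toric case directly: a smooth complete toric surface is rigid if and only if it is Fano by \cite[Corollary 2.8]{Il}, so every toric Fano surface in our class satisfies $h^1(X,T_X)=0$. The substance is therefore $(1)\Rightarrow(2)$. The first constraint comes from the base: a nontrivial deformation of $Y$ carries the fibration $\pi$ with it and hence deforms $X$, so $h^1(Y,T_Y)=0$ is necessary for rigidity, something also visible in the sequences above through the Riemann--Roch contribution of the genus $g$ to the Euler characteristics on $Y$. Since $h^1(Y,T_Y)=0$ holds only for $Y\cong\pp^1$, rigidity forces $g=0$ and $X$ to be rational.

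With $Y\cong\pp^1$ I would convert the two long exact sequences into an explicit formula for $h^1(X,T_X)$ in terms of the discrete invariants recorded by the polyhedral divisor of \cite{AH}: the number $m$ of reducible fibers and the self-intersections of $F^\pm$ and of the $E_i$. I expect two kinds of contributions. The positions of the $m$ reducible fibers on $\pp^1$ account for a summand of size $\max(0,m-3)$, the dimension of the moduli of $m$ points modulo $\mathrm{PGL}_2$, so rigidity bounds $m$. The remaining summands are first cohomology groups of line bundles of the shape $\oo_X(F^\pm)\otimes\omega_X^{-1}$ and $\oo_X(E_i)\otimes\omega_X^{-1}$, together with that of the leftmost sheaf; these vanish exactly when the corresponding divisors are positive enough, a condition I expect to coincide with the ampleness of $-K_X$. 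Putting the pieces together, $h^1(X,T_X)=0$ should be equivalent to $X$ being Fano, and the resulting bounded, explicit list of configurations consists precisely of the smooth toric del Pezzo surfaces admitting a $\kk^*$-action without elliptic points, namely $\pp^1\times\pp^1$ and $\mathbb F_1$ together with the del Pezzo surfaces of degree $7$ and $6$ (note that $\pp^2$, having Picard rank one, carries two disjoint sections for no such action and hence does not occur). In particular $X$ is then toric, closing $(1)\Rightarrow(2)$.

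The main difficulty will lie in the middle of the computation. Controlling the non locally free sheaf $\mathcal G\otimes\omega_X^{-1}$ and, above all, the connecting homomorphisms in the two long exact sequences is what turns the naive dimension counts into a sharp numerical criterion; concretely, I expect the delicate step to be proving that the base, fiber-position, and positivity contributions do not cancel against one another, so that $h^1(X,T_X)=0$ forces each of them to vanish separately. A secondary obstacle is the final recognition step: once the discrete data are constrained, one must check that the surviving surfaces genuinely carry a two dimensional torus action extending the given $\kk^*$, i.e.\ that their polyhedral divisor is that of a toric surface, which is exactly what upgrades ``Fano'' to ``toric Fano''.
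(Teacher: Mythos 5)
Your proposal does not contain a proof of the essential implication $(1)\Rightarrow(2)$: at every decisive point the argument is replaced by an expectation. You \emph{expect} the $h^1$ of the twisted line bundles to vanish exactly when $-K_X$ is ample, you \emph{expect} a summand $\max(0,m-3)$ from the fiber positions, and you \emph{expect} that the base, fiber-position and positivity contributions do not cancel against one another; the last point is exactly where a global formula obtained by splicing Theorem~\ref{main1} would be hardest to control, since $\mathcal G$ is not locally free and the connecting homomorphisms in your two long exact sequences are not accessible without substantial extra work. Your reduction to genus zero is also heuristic: ``a deformation of $Y$ deforms $X$'' is not an argument, and note that $\pi$ may have reducible fibers, so $X$ need not be ruled over $Y$. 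The paper handles this by quoting Seiler's theorem that ruled surfaces over a positive-genus curve have $h^1(T_X)>0$, together with Lemma~\ref{blow-up} ($h^1(T)$ can only grow under blow-up), since $X$ is a blow-up of a ruled surface. Finally, the toric recognition step that you flag as a ``secondary obstacle'' is genuinely needed and is supplied in the paper by Huggenberger's result that del Pezzo $\kk^*$-surfaces without elliptic fixed points are toric; without it, or without your unverified explicit classification, condition (2) as stated is not reached.

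The paper's actual route avoids any global formula for $h^1(X,T_X)$ and is worth internalizing: assuming an invariant rational curve $C$ with $C^2=-n\le-2$ exists, one tensors the four-term sequence of Lemma~\ref{PreEuler} (not the full sequence of Theorem~\ref{main1}) by $\oo_X(K_X+C)$ and checks $h^0$ of the outer terms vanishes — using $(K_X+C)\cdot C=-2$, the non-effectivity of $K_X$ on a rational surface, and $F\cdot(K_X+C)<0$ for a general fiber $F$ — which gives $h^0(\Omega_X(K_X+C))=0$, hence $h^2(T_X(-C))=0$ by Serre duality. The restriction sequence then yields a surjection $H^1(X,T_X)\twoheadrightarrow H^1(C,T_X|_C)$, and the normal bundle sequence gives $h^1(T_X|_C)\ge n-1\ge 1$, contradicting rigidity. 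So every invariant curve has self-intersection $\ge -1$; since invariant curves generate the Mori cone, $-K_X$ is ample, and Huggenberger's theorem upgrades del Pezzo to toric. This local, one-curve-at-a-time argument is precisely what replaces your delicate non-cancellation analysis; if you want to salvage your computational approach, you would need to actually establish the dimension formula (in the spirit of known $h^1(T_X)$ computations for $T$-varieties), which is a considerably longer undertaking than the paper's proof.
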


{\em Acknowledgments}. It is a pleasure to thank
J\"urgen Hausen for useful discussions on the subject.

\section{Preliminaries}
\label{sec:1}

Let $\kk$ be an algebraically closed field of characteristic zero
and $X$ an algebraic variety over $\kk$ having an action of
$T:=(\kk^*)^n$ (which is called the $n$-dimensional torus).
This action is called \textit{effective} if the only $t\in T$,
for which $t\cdot x=x$ holds for all $x\in X$, is the identity of $T$.
\begin{defn*}
A \textit{$T$-variety} is a normal algebraic variety $X$
coming with an effective $(\kk^*)^n$-action.
The \textit{complexity} of $X$ is the difference $\dim X-n$.
\end{defn*}

These varieties admit a polyhedral description
given by  K. Altmann and J. Hausen for the affine case in
\cite{AH} and later, together with H. Su\ss,
in \cite{AHS} for the non-affine case.
In the following section, we briefly recall this construction
as well as a definition by N. Ilten and H. Su\ss~
in \cite{IS} that helps to simplify the notation.

\subsection{Polyhedral divisors}
Let $N$ be a lattice of rank $n$, and $M=\textrm{Hom}\langle N,\zz\rangle$ its dual. 
We denote by $\langle -, -\rangle\colon M\times N\to\zz$ the perfect pairing defined
by $(u,v)\mapsto \langle u,v\rangle:=u(v)$ and by
$N_\Q:=N\otimes_\zz\Q$, $M_\Q:=M\otimes_\zz\Q$
the rational vector spaces.
A \textit{polyhedron} in $N_\Q$ is an intersection of finitely many affine half spaces in $N_\Q$. If we require the supporting hyperplane of any half space to be a linear subspace, the polyhedron is called a \textit{cone}. If $\sigma$ is a cone in $N_\Q$, its \textit{dual cone} is defined as
$$\sigma^\vee:=\{u\in M_\Q:\langle u,v\rangle\ge 0\textrm{ for all }v\in N_\Q\}.$$
Let $\Delta\subseteq N_\Q$ be a polyhedron. The set
$$\sigma:=\{v\in N_\Q:tv+\Delta\subseteq\Delta,\ \forall t\in\Q\}$$
is a cone called the \textit{tailcone} of $\Delta$ and $\Delta$ is called a \textit{$\sigma$-polyhedron}.

\begin{defn} Let $Y$ be a normal variety and $\sigma$ a cone. A \textit{polyhedral divisor} on $Y$
is a formal sum
$$\dd:=\sum_P \Delta_P\otimes P,$$
where $P$ runs over all prime divisors of $Y$ and the $\Delta_P$ are all $\sigma$-polyhedrons such that $\Delta_P=\sigma$ for all but finitely many $P$. We admit the empty set as a valid
$\sigma$-polyhedron too.
\end{defn}

 Let $\mathfrak{D}:=\sum\Delta_P\otimes P$ be a polyhedral divisor on $Y$, with tailcone $\sigma$. For every $u\in\sigma^\vee$ we define the evaluation
$$\mathfrak{D}(u):=\sum_{\substack{P\subset Y \\ \Delta_P\neq\emptyset}}\min_{v\in\Delta_P}\langle u,v\rangle\otimes P\in{\rm WDiv}_{\Q}({\rm Loc}\,\dd)$$
where ${\rm Loc}\,\dd:=Y\setminus (\cup_{\Delta_P=\emptyset}P)$ is the \textit{locus} of $\dd$.

\begin{defn}\label{pp}
Let $Y$ be a normal variety. A \textit{proper polyhedral divisor}, also called a \textit{pp-divisor} is a polyhedral divisor $\mathfrak{D}$ on $Y$, such that
\begin{enumerate}[(i)]
\item $\mathfrak{D}(u)$ is Cartier and semiample for every $u\in\sigma^\vee\cap M$.
\item $\mathfrak{D}(u)$ is big for every $u\in(\textrm{relint}\,\sigma^\vee)\cap M$.
\end{enumerate}
\end{defn}

Now, let $\mathfrak{D}$ be a pp-divisor on a semiprojective (i.e. projective over some affine variety) variety $Y$, $\mathfrak{D}$ having tailcone $\sigma\subseteq N_\Q$. This defines an $M$-graded algebra
$$A(\mathfrak{D}):=\bigoplus_{u\in\sigma^\vee\cap M}\Gamma({\rm Loc}\,\dd,\oo(\dd(u))).$$
The affine scheme $X(\dd):={\rm Spec}\,A(\dd)$ comes with a natural action of ${\rm Spec}\,\kk[M]$.
Definition~\ref{pp} is mainly motivated
by the following result~\cite{AH}*{Theorem 3.1 and Theorem 3.4}.

\begin{thm}
Let $\mathcal D$ be a pp-divisor on a 
normal variety $Y$.
Then $X(\dd)$ is an affine T-variety of complexity equal to $\dim Y$.
Moreover, every affine $T$-variety arises like this.
\end{thm}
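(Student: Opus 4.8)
The plan is to prove the two assertions in turn: first that the affine scheme $X(\dd)={\rm Spec}\,A(\dd)$ is a normal affine $T$-variety with $\dim X(\dd)=\dim Y+{\rm rk}\,N$, and then the converse classification. For the forward direction, the $M$-grading of $A(\dd)$ is precisely the datum of a $T={\rm Spec}\,\kk[M]$-action on $X(\dd)$, and this action is effective because the weight monoid $\sigma^\vee\cap M$ generates $M$ as a group. That $A(\dd)$ is an integral domain follows by embedding each homogeneous piece $\Gamma({\rm Loc}\,\dd,\oo(\dd(u)))$ into $K(Y)\otimes_\kk\kk[M]$ via $f\mapsto f\otimes\chi^u$; normality is then checked weightwise, since every graded piece is cut out inside $K(Y)$ by the valuative inequalities $\{f:{\rm div}(f)+\dd(u)\ge 0\}$, and such complete linear systems are stable under integral closure.

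The genuinely technical point of the forward direction is that $A(\dd)$ is finitely generated over $\kk$, and here I would use condition~(i) together with the semiprojectivity of $Y$. The evaluation map $u\mapsto\dd(u)$ is piecewise linear on $\sigma^\vee$, so I would subdivide $\sigma^\vee$ into finitely many rational subcones on which it is linear; over each such subcone the corresponding Veronese-type subalgebra is a section ring of a semiample (hence, after passing to a suitable multiple, globally generated) Cartier divisor on the semiprojective variety $Y$, and is therefore finitely generated. Gluing the finitely many pieces yields finite generation of $A(\dd)$. The dimension count then comes from condition~(ii): bigness of $\dd(u)$ for $u$ in the relative interior of $\sigma^\vee$ makes $h^0({\rm Loc}\,\dd,\oo(k\dd(u)))$ grow like $k^{\dim Y}$, so that ratios of equal-degree sections generate a subfield of $K(Y)$ of transcendence degree $\dim Y$, whence ${\rm trdeg}_\kk{\rm Frac}\,A(\dd)=\dim Y+{\rm rk}\,N$; hence the complexity $\dim X(\dd)-{\rm rk}\,N$ equals $\dim Y$.

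For the converse, let $X={\rm Spec}\,A$ be an affine $T$-variety, so that $A=\bigoplus_{u\in M}A_u$ is a normal $M$-graded domain whose weight monoid spans a cone $\sigma^\vee\subseteq M_\Q$. The step I expect to be the main obstacle is the construction of the base $Y$: I would form a suitable quotient of $X$ by $T$ (a Chow or GIT quotient), normalize it, and pass to a semiprojective model, arranged so that $K(Y)$ is the degree-zero part of ${\rm Frac}\,A$, of transcendence degree equal to the complexity. Granting such a $Y$, each homogeneous piece can be written as $A_u=\Gamma(Y,\oo(D_u))$ for a $\Q$-Cartier divisor $D_u$, and the inclusions $A_u\cdot A_{u'}\subseteq A_{u+u'}$ show that each coefficient function $u\mapsto{\rm coeff}_P(D_u)$ is concave (superadditive). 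I would then invert this data by the Legendre-type duality $\Delta_P:=\{v\in N_\Q:\langle u,v\rangle\ge{\rm coeff}_P(D_u)\text{ for all }u\in\sigma^\vee\}$, set $\dd:=\sum_P\Delta_P\otimes P$, and verify that $\dd(u)=D_u$, that $\dd$ is a pp-divisor (conditions~(i) and~(ii) translating respectively into the semiampleness and bigness of the $D_u$), and that $X(\dd)\cong X$ equivariantly. The delicate verifications are precisely that $Y$ may be taken semiprojective and that the divisors $D_u$ are genuinely Cartier and semiample; the remaining identifications are bookkeeping in the dictionary between convex geometry and $M$-graded algebras.
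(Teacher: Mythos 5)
The paper does not actually prove this theorem: it is quoted verbatim from \cite{AH}*{Theorem 3.1 and Theorem 3.4} as background material immediately after Definition~\ref{pp}, so there is no internal proof to compare against, and the right benchmark is Altmann--Hausen's original argument. Measured against that, your sketch follows the genuine route faithfully: the dictionary between $M$-gradings and $T$-actions, effectivity because $\sigma^\vee\cap M$ generates $M$ as a group (which needs the tailcone $\sigma$ to be pointed, a hypothesis worth stating explicitly), normality via the valuative description of the graded pieces, finite generation by subdividing $\sigma^\vee$ into the linearity chambers of $u\mapsto\dd(u)$ and invoking finite generation of section rings of semiample divisors on a semiprojective base, the dimension count from bigness on ${\rm relint}\,\sigma^\vee$, and, for the converse, recovering the coefficients $\Delta_P$ from the concave family $u\mapsto D_u$ by Legendre-type duality --- this is exactly the architecture of the proofs of Theorems 3.1 and 3.4 in \cite{AH}.

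Three sketch-level points deserve flagging. First, normality is not literally ``checked weightwise'': you must first know that the integral closure of $A(\dd)$ inside $K(Y)\otimes_\kk\kk[M]$ is again $M$-graded, and only then reduce to homogeneous elements and apply the valuative inequalities, which also uses the superadditivity $\dd(u)+\dd(u')\le\dd(u+u')$. Second, the statement as reproduced in the paper says only ``normal variety $Y$,'' whereas both your finite-generation argument and \cite{AH} require $Y$ semiprojective; your appeal to semiprojectivity is the correct reading and the paper's phrasing is the loose one. Third, in the converse you rightly identify the construction of $Y$ as the main obstacle but do not carry it out: in \cite{AH} it is the normalization of a distinguished component of the inverse limit of the GIT quotients attached to the chambers of the weight cone, and it is precisely in that construction that semiprojectivity of $Y$, the Cartier and semiample properties of the $D_u$, and the identifications $A_u=\Gamma(Y,\oo(D_u))$ are established. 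As a blind reconstruction of a cited theorem, your proposal is a sound and accurate roadmap of the actual proof, but at that last point it names the hard step rather than performing it.
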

\subsection{Divisorial fans}
Non-affine $T$-varieties are obtained by gluing 
affine $T$-varieties coming from pp-divisors in
a combinatorial way as specified in Definition~\ref{glue}.

Consider two polyhedral divisors $\mathcal{D}=\sum\Delta_P\otimes P$ and $\mathcal{D'}=\sum\Delta_P'\otimes P$ on $Y$, with tailcones $\sigma$ and $\sigma'$
respectively and such that $\Delta_P\subseteq\Delta_P'$
for every $P$. We then have an inclusion
$$\bigoplus_{u\in\sigma^\vee\cap M}\Gamma({\rm Loc}\,\dd,\oo(\dd(u)))\subseteq \bigoplus_{u\in\sigma^\vee\cap M}\Gamma({\rm Loc}\,\dd,\oo(\dd'(u))),$$
which induces a morphism $X(\dd')\rightarrow X(\dd)$. We say
that $\dd'$ is a \textit{face} of $\dd$, denoted by $\dd'\prec\dd$, if this morphism is an open embedding.

\begin{defn}\label{glue}
A \textit{divisorial fan} on $Y$ is a finite set $\mathcal{S}$ of pp-divisors on $Y$
such that for every pair of divisors $\dd=\sum\Delta_P\otimes P$ and $\dd'=\sum\Delta_P'\otimes P$
in $\mathcal{S}$, we have $\dd\cap\dd'\in\mathcal{S}$ and
$\dd\succ\dd\cap\dd'\prec\dd'$, where 
$\dd\cap\dd':=\sum(\Delta_P\cap\Delta_P')\otimes P$.
\end{defn}

This definition allows us to glue affine $T$-varieties via
$$X(\dd)\longleftarrow X(\dd\cap\dd')\longrightarrow X(\dd'),$$
thus resulting in a scheme $X(\mathcal{S})$.
For the following theorem see~\cite{AHS}*{Theorem 5.3 and Theorem 5.6}.

\begin{thm}
The scheme $X(\mathcal{S})$ constructed above is a $T$-variety of complexity equal to $\dim Y$.  Every $T$-variety
can be constructed like this.
\end{thm}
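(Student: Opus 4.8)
The plan is to reduce both assertions to the affine structure theorem stated above, treating the first claim as a gluing problem and the second as a covering problem, following the strategy of Altmann--Hausen--S\"uss~\cite{AHS}.

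For the first assertion, I would exhibit $X(\mathcal{S})$ as the scheme obtained by gluing the affine $T$-varieties $X(\dd)$, $\dd\in\mathcal{S}$, along the open embeddings furnished by the face relations. Each $X(\dd)$ is an affine $T$-variety of complexity $\dim Y$ by the affine theorem, and for any two members $\dd,\dd'\in\mathcal{S}$ the intersection $\dd\cap\dd'$ again lies in $\mathcal{S}$ and is a common face, so $X(\dd\cap\dd')$ sits as a $T$-invariant open subscheme of both $X(\dd)$ and $X(\dd')$ through the graded inclusions of the algebras $A(\dd)$. The essential checks are then: (i) the cocycle condition on triple overlaps, which follows from associativity of the operation $\dd\cap\dd'=\sum(\Delta_P\cap\Delta_P')\otimes P$ together with the closure of $\mathcal{S}$ under intersection, so that $\dd\cap\dd'\cap\dd''\in\mathcal{S}$ is a common face of all three pairwise overlaps; (ii) $T$-equivariance of the transition maps, which is automatic because every algebra inclusion preserves the $M$-grading; and (iii) separatedness and normality. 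Normality is local and hence inherited from the charts, while separatedness is exactly what the requirement $\dd\cap\dd'\in\mathcal{S}$ guarantees, in the same way a fan of cones with common faces yields a separated toric variety. The complexity equals $\dim Y$ because $\dim X(\mathcal{S})=\dim X(\dd)$ for any chart while the acting torus is the same throughout.

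For the second assertion, the starting point is Sumihiro's theorem: a normal variety with a torus action admits a cover by $T$-invariant affine open subsets. Applying the affine structure theorem to each chart of such a cover expresses it as $X(\dd_i)$ for a pp-divisor $\dd_i$ on its own quotient base $Y_i$. The substantive work --- and the step I expect to be the main obstacle --- is to produce a single semiprojective base $Y$ on which all these charts can be encoded simultaneously. I would accomplish this by passing to a normalization of the Chow quotient of $X$ by $T$, which dominates each $Y_i$, and pulling the individual pp-divisors back to this common $Y$; one must then verify that the resulting collection of polyhedral divisors is closed under intersection and satisfies the face relations, i.e.\ genuinely forms a divisorial fan $\mathcal{S}$ with $X\cong X(\mathcal{S})$.

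The delicate point throughout is the compatibility of the polyhedral data across charts: matching the slices $\Delta_P$ of the various pp-divisors over the common base, and confirming that the combinatorial intersection $\dd\cap\dd'$ corresponds geometrically to the scheme-theoretic overlap $X(\dd)\cap X(\dd')$. For the complexity-one setting that concerns us, where $Y$ is a smooth projective curve, this machinery simplifies considerably: polyhedral divisors reduce to finite sums of intervals and rays attached to points of $Y$, and the Chow quotient is just $Y$ itself. It is precisely this simplified picture that we will exploit later in constructing the Euler-type sequence of Theorem~\ref{main1}.
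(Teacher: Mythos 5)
This theorem is not actually proved in the paper: it is imported verbatim from Altmann--Hausen--S\"uss, with the text pointing to \cite{AHS}*{Theorem 5.3 and Theorem 5.6}, so there is no internal argument to compare yours against, and your sketch is in fact a reconstruction of the cited proof's architecture --- gluing the affine charts $X(\dd)$ along the faces $\dd\cap\dd'$, cocycle condition from closure of $\Ss$ under intersection, equivariance from the $M$-graded algebra inclusions, and Sumihiro's theorem plus the affine structure theorem for the converse. As an outline this is the right route, but two of your steps are asserted rather than argued, and they are precisely where the real work of \cite{AHS} lies. First, separatedness is \emph{not} ``exactly'' the condition $\dd\cap\dd'\in\Ss$ with the face relations: since the face relation is defined by the maps $X(\dd\cap\dd')\to X(\dd)$ being open embeddings, the gluing conditions formally yield only a prevariety. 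Separatedness requires in addition that $X(\dd\cap\dd')\to X(\dd)\times X(\dd')$ be a closed embedding, i.e.\ that the images of $A(\dd)$ and $A(\dd')$ together generate $A(\dd\cap\dd')$ --- the analogue of the separation lemma for fans of cones, which in the toric case is a genuine lemma (via a separating supporting hyperplane) and not a tautology; your appeal to the toric analogy names the right phenomenon but skips the argument. Second, for the converse you correctly isolate the crux --- a single semiprojective base dominating the quotients $Y_i$ of the Sumihiro charts, with the candidate being (a normalization of a component of) the Chow quotient, which matches the construction of \cite{AH} --- but pulling the pp-divisors back to this common base and verifying that the resulting collection is closed under intersection with the correct face relations requires the functoriality machinery for pp-divisors (pull-back along modifications of the base and the induced maps on the $M$-graded algebras); your proposal flags this as the main obstacle without carrying it out. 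So: same approach as the source the paper cites, correct as a roadmap, but with the separatedness lemma and the common-base verification left as acknowledged gaps rather than completed steps.
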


\subsection{Marked fansy divisors on curves}
In this subsection $Y$ will be a smooth
projective curve.
For a polyhedral divisor
$\dd=\sum_P\Delta_P\otimes P$ on $Y$, and a point $y\in Y$, set
$$\dd_y:=\sum_{P\ni y}\Delta_P.$$
Then for a divisorial fan $\mathcal{S}$ on $Y$, we define the \textit{slice} of $\mathcal{S}$ at $y$
as $\{\dd_y:\dd\in\Ss\}$.

Now, the slices of a divisorial fan do not give enough information about
the corresponding $T$-variety.
Two divisorial fans $\mathcal{S}$ and $\mathcal{S}'$
can have the same slices, yet $X(\mathcal{S})\neq X(\mathcal{S}')$.
In \cite{IS}, this issue is fixed for the case of
complete complexity-one $T$-varieties
with the following definition.

\begin{defn}
A \textit{marked fansy divisor} on a curve $Y$ is a formal sum $\Xi=\sum_{P\in Y}\Xi_P\otimes P$
together with a fan $\Sigma$ and a subset $C\subseteq\Sigma$, such that
\begin{enumerate}[(i)]
\item $\Xi_P$ is a complete polyhedral subdivision of $N_\Q$, and ${\rm tail}\,(\Xi_P)=\Sigma$
for all $P\in Y$.
\item For $\sigma\in C$ of full dimension, $\sum\Delta_P^\sigma\otimes P$ is a pp-divisor,
where $\Delta_P^\sigma$ is the only $\sigma$-polyhedron of $\Xi_P$.
\item For $\sigma\in C$ of full dimension and $\tau\prec\sigma$, we have $\tau\in C$ if and only
if $(\sum_P \Delta_P^\sigma)\cap\tau\neq\emptyset$ .
\item If $\tau\prec\sigma$ and $\tau\in C$, then $\sigma\in C$.
\end{enumerate}
We say that the cones in $C$ are \textit{marked}.
\end{defn}

Given a complete divisorial fan $\mathcal{S}$ on a curve $Y$, we can define the marked fansy
divisor $\Xi=\sum_P\mathcal{S}_P\otimes P$ with marks on all the tailcones
of divisors $\dd\in\mathcal{S}$ having complete locus. We denote it by $\Xi(\mathcal{S})$.
The following is proved in~\cite{IS}*{Proposition 1.6}.

\begin{prop} For any marked fansy divisor $\Xi$, there exists a complete divisorial fan
$\mathcal{S}$ with $\Xi(\mathcal{S})=\Xi$. If two divisorial fans $\mathcal{S}$, $\mathcal{S}'$ satisfy $\Xi(\mathcal{S})=\Xi(\mathcal{S}')$, then $X(\mathcal{S})=X(\mathcal{S}')$.
\end{prop}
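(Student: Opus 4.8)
The plan is to treat the two assertions in turn: first I construct an explicit divisorial fan $\mathcal{S}$ realizing a prescribed marked fansy divisor $\Xi=\sum_P\Xi_P\otimes P$ (with tailfan $\Sigma$ and marked set $C$), and then I show that the glued scheme $X(\mathcal{S})$ depends only on $\Xi$ and not on the realizing fan. For existence I would read the pp-divisors off the combinatorics of $\Xi$. For each full-dimensional marked cone $\sigma\in C$, condition (ii) already tells us that $\dd^\sigma:=\sum_P\Delta_P^\sigma\otimes P$ is a pp-divisor with complete locus, where $\Delta_P^\sigma$ is the unique $\sigma$-polyhedron of $\Xi_P$. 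For a full-dimensional cone $\sigma\notin C$ the same slice sum need not be proper, so I would shrink its locus by deleting finitely many points of $Y$ to obtain a pp-divisor with the same tailcone but non-complete locus; the point to check is that such a deletion making the divisor proper always exists. Taking these together with every pairwise intersection $\dd\cap\dd'$ yields a finite set $\mathcal{S}$ of pp-divisors.

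Next I would verify that $\mathcal{S}$ is a divisorial fan and that $\Xi(\mathcal{S})=\Xi$. Closure under intersection and the face relations reduce to the identity $\Delta_P^\sigma\cap\Delta_P^{\sigma'}=\Delta_P^{\sigma\cap\sigma'}$, valid because each $\Xi_P$ is a polyhedral subdivision: the containment $\Delta_P^{\sigma\cap\sigma'}\subseteq\Delta_P^\sigma$ gives $\dd^{\sigma\cap\sigma'}\prec\dd^\sigma$, so the intersection of two members is a common face, as required by Definition~\ref{glue}. By construction the slice of $\mathcal{S}$ at $P$ is exactly $\Xi_P$, and conditions (iii) and (iv) guarantee that a face carries complete locus precisely when its cone lies in $C$; hence the marks of $\Xi(\mathcal{S})$ are exactly $C$, so $\Xi(\mathcal{S})=\Xi$.

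For the second assertion I would exhibit $X(\mathcal{S})$ as glued from charts intrinsic to $\Xi$. The affine charts $X(\dd^\sigma)$ with $\sigma\in C$ full-dimensional are determined by the slices $\Delta_P^\sigma$, hence by $\Xi_P$ and $C$ alone, and their overlaps $X(\dd^\sigma\cap\dd^{\sigma'})$ correspond to the marked common faces, again read off from $\Xi$. A pp-divisor of $\mathcal{S}$ with non-complete locus restricts to an open subset already contained in this cover, so it does not change the glued space. Comparing two fans $\mathcal{S},\mathcal{S}'$ with $\Xi(\mathcal{S})=\Xi(\mathcal{S}')=\Xi$ chart by chart then produces an isomorphism $X(\mathcal{S})\cong X(\mathcal{S}')$ compatible with the gluing, giving $X(\mathcal{S})=X(\mathcal{S}')$.

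The main obstacle I anticipate lies in the unmarked cones, whose deleted loci are \emph{not} recorded by $\Xi$: I must show both that a proper shrinking always exists (for existence) and that different choices of which points to delete glue in the same open subset of $X(\mathcal{S})$ (for uniqueness). Making this precise requires analysing how the affine chart of a non-complete-locus pp-divisor on the curve $Y$ embeds into the ambient $T$-variety, and checking that this embedding, together with the marked common faces that encode all the gluing morphisms, is canonically determined by the slices and the marking. Establishing this redundancy is the crux on which both halves of the proposition rest.
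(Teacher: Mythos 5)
First, a point of comparison: the paper offers no proof of this statement at all --- it is quoted verbatim from Ilten--S\"u\ss~\cite{IS}*{Proposition 1.6} --- so your attempt can only be measured against the actual argument there, and against that standard it has genuine gaps, two of which go beyond the one you flag yourself. The first is in your existence construction: for an unmarked full-dimensional cone $\sigma$ you produce a \emph{single} pp-divisor by ``deleting finitely many points of $Y$.'' This cannot work. If $y$ is a deleted point, then that divisor has empty coefficient at $y$, so the slice of your $\mathcal{S}$ at $y$ no longer contains the $\sigma$-polyhedron $\Delta_y^\sigma$; hence $\Xi(\mathcal{S})\neq\Xi$, and moreover $X(\mathcal{S})$ fails to be complete over the deleted fibers. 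What is needed is a \emph{family} of pp-divisors with the same polyhedral coefficients but different affine loci (e.g.\ restrictions to $Y\setminus\{y\}$ for varying $y$) whose loci cover $Y$ --- and this multiplicity of choices is precisely what makes the uniqueness half nontrivial. The second gap is your claim that the face relations ``reduce to the identity $\Delta_P^\sigma\cap\Delta_P^{\sigma'}=\Delta_P^{\sigma\cap\sigma'}$'': coefficientwise containment, or even facewise containment, of polyhedral divisors does \emph{not} imply $\mathcal{D}'\prec\mathcal{D}$. The face relation is by definition that the induced map $X(\mathcal{D}')\to X(\mathcal{D})$ is an open embedding, and verifying it requires the open-embedding criterion of \cite{AHS}, which involves the degree polytope $\sum_P\Delta_P^\sigma$ of divisors with complete locus. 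This is exactly where conditions (iii) and (iv) of the marked fansy divisor definition must be invoked --- they encode which faces of a marked cone again carry complete locus --- and your sketch never uses them, which is a sign the verification has been skipped rather than carried out.

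Finally, you are candid that the ``crux'' --- that different shrinkings of the loci over unmarked cones yield the same glued variety --- is left open, but since this is the entire content of the uniqueness assertion, the proposal as it stands proves neither half: existence fails as constructed (one shrinking per unmarked cone), and uniqueness is reduced to an unproved claim. The repair is to show that any two systems of affine shrinkings admit a common refinement inducing open embeddings of the corresponding charts compatibly with the gluing along marked common faces; combined with the corrected covering-family construction and the degree-criterion check for $\prec$, this recovers the argument of \cite{IS}. Identifying where the hypotheses (iii) and (iv) enter would have been the right guide: a proof of this proposition that never uses them cannot be complete.
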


\subsection{$\kk^*$-surfaces}\label{k-surf}

We now look at the case of a $T$-variety of dimension two and complexity one. We call this
type of variety a \textit{$\kk^*$-surface}.

\begin{defn}\label{elliptic}
Let $X$ be a $\kk^*$-surface and let $x\in X$ be a fixed point
for the torus action. We say that the fixed point $x$ is:

\begin{itemize}
\item \textit{elliptic} if there is an invariant open neighborhood $U$ 
of $x$ such that this point lies in the closure of every orbit of $U$,
\item \textit{parabolic} if $x$ lies on a curve
made entirely of fixed points,
\item \textit{hyperbolic} otherwise.
\end{itemize}
\end{defn}

Before proceeding recall that
a morphism of varieties $\pi\colon X\rightarrow Y$ where $X$
is a $T$-variety is called a \textit{good quotient} if $\pi$ is affine, constant
on the orbits and the pullback $\pi^*\colon\oo_Y\rightarrow \pi_*(\oo_X)^T$
is an isomorphism.

\begin{prop}
Let $X$ be a complete $\kk^*$-surface. The following statements are equivalent.
\begin{enumerate}[(i)]
\item There exists a morphism $X\rightarrow Y$ onto a 
smooth projective curve $Y$ that is a good quotient
for the $\kk^*$-action.
\item $X$ has no elliptic fixed points.
\item $X$ is given by a marked fansy divisor without marks.
\end{enumerate}
\end{prop}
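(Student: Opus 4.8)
The plan is to establish the cycle $(i)\Leftarrow(iii)$, $(i)\Rightarrow(ii)$, $(ii)\Rightarrow(iii)$, reducing everything to the local structure of the affine charts $X(\dd)$. Since $X$ is a $\kk^*$-surface the base $Y$ is a curve, the lattice $N$ has rank one, and $N_\Q\cong\Q$; hence every slice $\Xi_P$ is a complete subdivision of the line into two unbounded rays and finitely many bounded segments, and the tail fan $\Sigma$ has precisely two maximal (one-dimensional) cones $\sigma^+=\Q_{\ge0}$ and $\sigma^-=\Q_{\le0}$. Using the proposition of Ilten and Su\ss\ recalled above I choose a complete divisorial fan $\Ss$ with $\Xi(\Ss)=\Xi$; by that construction the marked cones of $\Xi$ are exactly the tailcones of those $\dd\in\Ss$ whose locus ${\rm Loc}\,\dd$ equals $Y$. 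Because $\{0\}$ is a face of both $\sigma^+$ and $\sigma^-$, the compatibility condition (iv) in the definition of a marked fansy divisor forces $C$ to be empty as soon as no maximal cone is marked; thus $\Xi$ has no marks if and only if no $\dd\in\Ss$ has complete locus.

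For $(iii)\Rightarrow(i)$ I would use that, in the absence of marks, every $\dd\in\Ss$ has affine locus ${\rm Loc}\,\dd=Y\setminus S_\dd$ with $S_\dd$ a nonempty finite set, so that $A(\dd)_0=\Gamma({\rm Loc}\,\dd,\oo)$ is the coordinate ring of the affine curve ${\rm Loc}\,\dd$ and the affine GIT quotient $X(\dd)\to{\rm Spec}\,A(\dd)_0={\rm Loc}\,\dd\subseteq Y$ is a good quotient onto an affine open subset of $Y$. The face relations $\dd'\prec\dd$ induce compatible open inclusions of loci, so these maps glue to a morphism $\pi\colon X\to Y$ which is affine and orbit-constant chart by chart, and the identity $A(\dd)_0=\Gamma({\rm Loc}\,\dd,\oo_Y)$ gives $\pi^*\oo_Y\cong(\pi_*\oo_X)^T$; hence $\pi$ is a good quotient, which is (i). For $(i)\Rightarrow(ii)$, suppose $\pi\colon X\to Y$ is a good quotient and, for contradiction, let $x$ be elliptic with invariant neighbourhood $U$. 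Since $\pi$ is continuous and constant on orbits, and each orbit $O\subseteq U$ has $x$ in its closure, we get $\pi(\overline{O})=\{\pi(x)\}$ and therefore $\pi(U)=\{\pi(x)\}$, so $U\subseteq\pi^{-1}(\pi(x))$. This is impossible, because $U$ is open of dimension two while a fibre of the dominant map $\pi$ to the curve $Y$ has dimension at most one.

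It remains to prove $(ii)\Rightarrow(iii)$, for which I would argue the contrapositive and show that a marked maximal cone produces an elliptic point. If $\sigma^+$ is marked then ${\rm Loc}\,\dd_{\sigma^+}=Y$, so $A(\dd_{\sigma^+})_0=\Gamma(Y,\oo)=\kk$; as the grading is by $\sigma^{+\vee}\cap M=\zz_{\ge0}$, the algebra $A(\dd_{\sigma^+})$ is nonnegatively graded with trivial degree-zero part, hence an affine cone whose vertex is a $T$-fixed point lying in the closure of every orbit of the invariant open chart $X(\dd_{\sigma^+})\subseteq X$. That vertex is therefore an elliptic point of $X$, and the case of $\sigma^-$ is symmetric; so the presence of any mark forces an elliptic point, while the reverse implication $(iii)\Rightarrow(ii)$ follows formally by chaining $(iii)\Rightarrow(i)\Rightarrow(ii)$. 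I expect the main work to lie precisely in this dictionary between the combinatorics and the geometry, namely identifying a marked maximal cone with an affine chart carrying an attractive elliptic vertex; this rests on the local Altmann--Hausen description of $X(\dd)$ and on careful bookkeeping of the $M$-grading.
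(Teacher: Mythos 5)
Your proposal is correct and takes essentially the same route as the paper: the same cycle of implications, with the good quotient collapsing the invariant neighbourhood of an elliptic point for $(i)\Rightarrow(ii)$, the contrapositive argument that a mark gives a chart with complete locus, hence $A(\dd)_0=\Gamma(Y,\oo)\cong\kk$ and a positively (or negatively) graded algebra whose vertex is an elliptic fixed point, for $(ii)\Rightarrow(iii)$, and the gluing of the affine quotients $X(\dd)\to{\rm Loc}\,\dd$ induced by $A(\dd)_0\subseteq A(\dd)$ for $(iii)\Rightarrow(i)$. Your additions (the explicit fibre-dimension count, the observation that condition (iv) reduces marks to maximal cones) are harmless refinements of the paper's argument, which also notes, as you implicitly use, that completeness of $X$ makes $\{{\rm Loc}\,\dd:\dd\in\Ss\}$ an affine open cover of $Y$.
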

\begin{proof} We prove three implications.

{\em $(i)\Rightarrow (ii)$.} Let $x\in X$ be an elliptic fixed point. There is an open neighborhood $x\in U$ such that $x$ lies in the closure of every orbit in $U$. Therefore, there cannot
be a good quotient $X\rightarrow Y$ because the open set $U$
would be mapped to a single point; a contradiction.

{\em $(ii)\Rightarrow (iii)$.}
Assume that the marked fansy divisor defining $X$ has a mark. There is then an open affine chart
of $X$ given by a pp-divisor $\dd$ with complete locus $Y$.
The zero-degree component of $A(\dd)$ is $\Gamma(Y,\oo)\cong\kk$, meaning that
the degrees of the generators of $A(\dd)$ as an algebra are either all positive
(if ${\rm tail}\,\dd=\Q_{\ge 0}$) or all negative
(if ${\rm tail}\,\dd=\Q_{\le 0}$). In either case, we can take local coordinates
such that the origin $x_0$ lies in the closure of every orbit, i.e. $x_0$ is an elliptic fixed point.

{\em $(iii)\Rightarrow (i)$.}
Let $\Ss$ be a divisorial fan on a smooth projective
curve $Y$ such that the marked
fansy divisor for $X$ is $\Xi(\Ss)$ . Since there are no marks, each $\dd\in\Ss$ has
an affine locus, so there is a morphism
$X(\dd)\rightarrow {\rm Loc}\,\dd$ coming from the inclusion
$A(\dd)_0:=\Gamma({\rm Loc}\,\dd,\oo)\subseteq A(\dd)$.
These glue together to a morphism $\pi\colon X\rightarrow Y$ because
the completeness of $X$ implies that
$\{{\rm Loc}\,\dd:\dd\in\Ss\}$ is an affine open covering of $Y$.
Thus $\pi$ is a good quotient because $A(\dd)_0$ is precisely the subalgebra of invariants of $A(\dd)$.
\end{proof}

Let $X$ be a complete $\kk^*$-surface. 
There exist two invariant subsets $F^-\subseteq X$
and $F^+\subseteq X$, called \textit{sink} and \textit{source}
respectively, such that there is an open set $U\subseteq X$
where the closure in $X$ of every orbit in $U$ intersects
both $F^-$ and $F^+$. There are finitely many orbits
outside of $U\cup F^-\cup F^+$, that are called
the \textit{special} orbits. The source can be either
an elliptic point or an irreducible curve of parabolic points;
the same holds true for the sink. Every fixed point
outside of $F^+\cup F^-$ is hyperbolic.

Now, consider a complete smooth $\kk^*$-surface
having no elliptic points.  Denote by $E_1,\ldots, E_r$
the closures of the special orbits.
F. Orlik and P. Wagreich construct a graph
having vertex set $\{E_1,\ldots,E_r,F^+,F^-\}$
and two vertices are joined by an edge if and only if
the two corresponding curves intersect. Each vertex carries
a weight equal to the self-intersection number
of the curve that it represents.
This graph takes the following form.

\begin{figure}[h]\label{Fansy}
\begin{center}
\begin{tikzpicture}[scale=0.8]
 \newcommand{\nd}
 {\node[circle,draw, fill=black,inner sep=0pt,outer sep=2pt,minimum size=3pt]}
 \newcommand{\bk}
 {\node[circle, fill=white,inner sep=0pt,outer sep=2pt,minimum size=7pt]}
 \nd (S1) at (0,0) {}; \node[left] at (S1) {\small{$-c^-$}};
 \nd (S2) at (8,0) {}; \node[right] at (S2) {\small{$-c^+$}};
 \nd (E1) at (2,1) {}; \node[above] at (E1) {\small $-d_{1,1}$};
 \nd (E8) at (6,1) {}; \node[above] at (E8) {\small $-d_{1,s_1}$};
 \bk (E9) at (2,0) {$\vdots$}; 
 \bk (E12) at (6,0) {$\vdots$};
 \nd (E13) at (2,-1) {}; \node[below] at (E13) {\small $-d_{m,1}$};
 \nd (E16) at (6,-1) {}; \node[below] at (E16) {\small $-d_{m,s_m}$};
 \draw[-] (S1) -- (E1); \draw[-] (E1) -- (3,1);  \draw[dotted] (3.2,1) -- (4.8,1);
 \draw[-] (5,1) -- (E8); \draw[-] (E8) -- (S2);
 
\draw[-] (S1) -- (E9);  \draw[-] (E12) -- (S2);

 \draw[-] (S1) -- (E13); \draw[-] (E13) -- (3,-1);  \draw[dotted] (3.2,-1) -- (4.8,-1);
 \draw[-] (5,-1) -- (E16); \draw[-] (E16) -- (S2);
\end{tikzpicture}
\end{center}\caption{The graph of the $\kk^*$-surface}
\label{Graph}
\end{figure}

The $d_{i,j}$ are all positive and satisfy that the
Hirzebruch-Jung continued fraction
$[d_{i,1},d_{i,2},\ldots,d_{i,s_i}]$ equals 0
for every $1\le i\le m$.
On the other hand, our surface is 
given by a marked fansy divisor, without marks,
on a smooth projective curve $Y$.

\begin{figure}[h]
\begin{center}
\begin{tikzpicture}[scale=1]
 \newcommand{\nd}
 {\node[circle,draw, fill=white,inner sep=0pt,outer sep=2pt,minimum size=17pt]}
 \newcommand{\bk}
 {\node[circle, fill=white,inner sep=0pt,outer sep=2pt,minimum size=7pt]}
\bk at (-1,0) { };
\node at (7,0) {$p_1$}; \node at (7,-1.4) {$p_m$};
\node at (1,0) {\tiny $|$}; \node at (5,0) {\tiny $|$}; 
\node at (1,-1.4) {\tiny $|$}; \node at (5,-1.4) {\tiny $|$}; 
\node[above] at (1,0) {\tiny $a_{1,1}/b_{1,1}$}; \node[above] at (5,0) {\tiny $a_{1,s_1}/b_{1,s_1}$}; 
\node[above] at (1,-1.4) {\tiny $a_{m,1}/b_{m,1}$}; \node[above] at (5,-1.4) {\tiny $a_{m,s_m}/b_{m,s_m}$}; 
\node at (7,-0.6) {$\vdots$};
\node at (2.3,0.1) {$\cdot$}; \node at (3,0.1) {$\cdot$}; \node at (3.7,0.1) {$\cdot$};
\node at (2.3,-1.3) {$\cdot$}; \node at (3,-1.3) {$\cdot$}; \node at (3.7,-1.3) {$\cdot$};
 \draw[<->] (0,0) -- (6,0);  
 \draw[<->] (0,-1.4) -- (6,-1.4); 
\end{tikzpicture}
\end{center}\caption{Marked fansy divisor without marks}
\label{Fansy}
\end{figure}
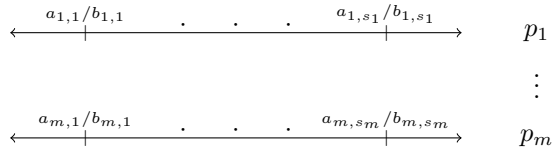

Where the smoothness of the surface
implies that $b_{i,j}a_{i,j+1}-a_{i,j}b_{i,j+1}=1$
for every $1\le i\le m$, $1\le j< s_i$, as well as
$b_{i,1}=b_{i,s_i}=1$, as shown in~\cite{Sus}*{Theorem 3.3}.
It turns out, as well, that each $b_{i,j}$ with $j>1$
equals the Hirzebruch-Jung continued fraction
$[d_{i_1},d_{i,2},\ldots,d_{i,j-1}]$.

\section{Euler sequence}
\label{sec:2}

\subsection{Euler sequence for $\kk^*$-surfaces}
Let $X$ be a complete smooth  $\kk^*$-surface
having no elliptic points given by a marked fansy
divisor $\Xi$ with no marks,
like the one depicted in Figure \ref{Fansy}.
Each fraction $a_{i,j}/b_{i,j}$ in the figure
corresponds to a divisor $E_{i,j}$ which is the
closure of a special orbit of $X$.

\begin{defn}
The multiplicity of the divisor $E_{i,j}$ is the
non-negative integer 
\[
 \mu(E_{i,j}) := b_{i,j}-1.
\]
According to the definition of the divisor
$E_{\mathcal S}$ given in the introduction
the equality 
$E_{\Ss}:=\sum_{i,j} \mu(E_{i,j})\cdot E_{i,j}$
holds, where $1\le i\le m$ and $1\le j\le s_i$.
\end{defn}

Let $\Omega_X$ and $\Omega_{Y}$
be the cotangent sheaves of $X$ and $Y$
respectively. 
As in Section \ref{k-surf}, let $F^-$ and $F^+$
denote the source and sink of $X$.
Let $Z\subseteq X$ be the set of hyperbolic
fixed points of $X$. In what follows
we will call $\mathcal{F}_X$
the sheaf $\mathcal{O}_{F^-}\oplus\mathcal{O}_{F^+}\oplus\mathcal{O}_Z$.

\begin{lem}\label{PreEuler}
Let $X$ be a complete smooth  $\kk^*$-surface without elliptic points.
There is an exact sequence of 
$\mathcal{O}_X$-modules\\
\[
 \begin{array}{l}
 \xymatrix{
  0\ar[r] & 
  \pi^*\Omega_{Y}\otimes_{\mathcal{O}_X}\mathcal{O}_X(E_{\mathcal{S}})
  \ar[r]^-{\imath} &
  \Omega_{X}\ar[r]^-\alpha & 
  \mathcal{O}_X\ar[r] & 
  \mathcal{F}_X\ar[r] & 0
  }
 \end{array}
\]

where $\alpha$ is defined by
$fdz\mapsto \deg(z)fz$, for any
homogeneous local coordinate $z$
with respect to the $\zz$-grading
of $\mathcal{O}_X$ induced by the $\kk^*$-action
and $\imath$ is defined by $dt\otimes f\mapsto fdt$,
where $t$ is the pull-back of a local
coordinate on $Y$.

\end{lem}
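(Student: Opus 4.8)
The plan is to verify exactness by a local computation, working separately near each type of fixed locus of the $\kk^*$-action, since both $\Omega_X$ and the map $\alpha$ are most transparent in $\kk^*$-homogeneous local coordinates. First I would fix the strategy: exactness is a local property, so it suffices to choose, around each point of $X$, a $\kk^*$-invariant affine chart with explicit homogeneous coordinates and check that the four-term sequence is exact there. The torus action induces a $\zz$-grading on $\mathcal O_X$, and on a smooth surface we can pick local coordinates $z,w$ that are homogeneous of weights $\deg(z),\deg(w)$. The map $\alpha\colon\Omega_X\to\mathcal O_X$ sends $fdz\mapsto\deg(z)fz$ and $fdw\mapsto\deg(w)fw$, so on a chart with coordinates $z,w$ it is the $\mathcal O_X$-linear map $(f,g)\mapsto\deg(z)fz+\deg(w)gw$ under the local trivialization $\Omega_X\cong\mathcal O_X\,dz\oplus\mathcal O_X\,dw$.

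The heart of the argument is the case analysis according to where the point sits. Away from all fixed points the action is free in one direction, so exactly one coordinate, say $t$, has weight zero (it descends to a coordinate on $Y$) and the other, say $s$, has a nonzero weight; then $\alpha$ is surjective (its image contains the unit-up-to-scalar $\deg(s)s$ times a unit), the cokernel $\mathcal F_X$ vanishes, the kernel of $\alpha$ is the rank-one subsheaf generated by the invariant differential $dt$, and the injection $\imath$ identifies this with $\pi^*\Omega_Y\otimes\mathcal O_X(E_{\mathcal S})$ once one accounts for the twist by $E_{\mathcal S}$. The twist by $\mathcal O_X(E_{\mathcal S})=\mathcal O_X(\sum\mu(E_{i,j})E_{i,j})$ is exactly what corrects the discrepancy between $\pi^*\Omega_Y$ and $\ker\alpha$ along the reducible fibers, where $t=\pi^*(\text{local coordinate})$ vanishes to higher order; here I would use the smoothness relations $b_{i,j}a_{i,j+1}-a_{i,j}b_{i,j+1}=1$, $b_{i,1}=b_{i,s_i}=1$ from Figure~\ref{Fansy} to compute the vanishing order of $dt$ in terms of the multiplicities $\mu(E_{i,j})=b_{i,j}-1$. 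Along the source $F^+$ and sink $F^-$ (curves of parabolic fixed points) one coordinate has weight zero along the curve and the other is a weight-$\pm1$ coordinate transverse to it; there $\alpha$ fails to be surjective precisely along the curve, producing the summand $\mathcal O_{F^\pm}$ of $\mathcal F_X$. At a hyperbolic fixed point both coordinates have nonzero weight and vanish at the point, so $\alpha$ maps into the maximal ideal and the cokernel is the skyscraper $\mathcal O_Z$ there.

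I expect the main obstacle to be the bookkeeping of the twist by $E_{\mathcal S}$, that is, checking that $\ker\alpha$ really is $\pi^*\Omega_Y\otimes\mathcal O_X(E_{\mathcal S})$ and not merely $\pi^*\Omega_Y$, uniformly across all charts. The subtlety is that $\pi\colon X\to Y$ has multiple fibers over the points $p_1,\dots,p_m$, so the pullback $\pi^*(dt_Y)$ of a local coordinate on $Y$ acquires zeros along the fiber components $E_{i,j}$, and the exact order of vanishing along each $E_{i,j}$ must match $\mu(E_{i,j})=b_{i,j}-1$. Verifying this requires translating the polyhedral/continued-fraction data of the marked fansy divisor into the ramification behavior of $\pi$, which is where the relation between $b_{i,j}$ and the Hirzebruch-Jung continued fractions recorded after Figure~\ref{Fansy} enters. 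Once the local models are pinned down and these vanishing orders are confirmed, patching the local descriptions into a global sequence is formal, since the maps $\imath$ and $\alpha$ are defined invariantly and agree on overlaps.
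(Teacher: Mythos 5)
Your plan follows essentially the same route as the paper: a chart-by-chart verification in $\kk^*$-homogeneous coordinates, with your local models (fiber components over points with trivial and non-trivial slices, source/sink charts, hyperbolic fixed points) matching the paper's four cases of polyhedra $\Delta$, the cokernel computation producing $\mathcal{F}_X$, and the twist by $E_{\mathcal S}$ recovered from the vanishing order of the pulled-back differential $dx=z^{b-1}w^{l-1}(bw\,dz+lz\,dw)$ exactly as you anticipate. One caveat: along a multiple fiber component $E_{i,j}$ with $b_{i,j}>1$ there is no weight-zero local coordinate even away from fixed points (invariant functions are pulled back from $Y$ and vanish to order $b_{i,j}$ along $E_{i,j}$, so their differentials vanish there), but your own discussion of matching the vanishing orders to $\mu(E_{i,j})=b_{i,j}-1$ already supplies the correct treatment of these charts, so this is a local misstatement rather than a gap.
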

\begin{proof}
Let $\Xi$ be a fansy divisor describing $X$.
Each affine chart of $X$, or an intersection
of them, is given by a polyhedron
$\Delta$ on some slice of $\Xi$. In other words,
it is given by the pp-divisor 
$$\dd=\Delta\otimes p+\sum_{\mathcal{P}-\{p\}}\emptyset\otimes p,$$
where $\mathcal{P}$ is the set of points
of $Y$ with non-trivial slices. We analyze each possible
$\Delta$ separately.
\vspace{2mm}

{\em Case 1}.
$\Delta=[a_1/b_1,a_2/b_2]$ with $b_1b_2\neq 0$.
In this case $\mathcal{D}$ defines an open
affine subset $X_{\mathcal{D}}$ of $X$ which
is the spectrum of the algebra
\[
 \bigoplus_{u\in\zz}
 \Gamma({\rm loc}(\mathcal{D}),\mathcal{O}(\mathcal{D}(u)))
 =
 S^{-1}\kk[x^{a_2}\chi^{-b_2},x^{-a_1}\chi^{b_1}]
 \cong S^{-1}\kk[z,w] =: R,
\]
where $x$ is a regular function of ${\rm loc}(\mathcal{D})$
which has a simple zero at $p$ and is non-zero
at ${\rm loc}(\mathcal{D})-\{p\}$, while
$S\subseteq\kk[x^{a_2}\chi^{-b_2},x^{-a_1}\chi^{b_1}]$ 
is the multiplicative system defined by degree zero
homogeneous polynomials
which do not vanish on ${\rm loc}(\mathcal{D})$.
The first equality is due to our assumption
on $\mathcal{D}$.
We have an exact sequence
\[
 \xymatrix{
  Rdz\oplus Rdw\cong \Omega_{R}\ar[r]^-\alpha
  & R\ar[r] & R/I \ar[r] & 0
 }
\]
where $I\subseteq R$ is the ideal $\langle {\rm deg}(z)z,{\rm deg}(w)w\rangle$.
The restriction of the quotient map
$\pi\colon X\to Y$ to the open
subset $X_{\mathcal{D}}$ is defined
by the inclusion $\kk[x]\subseteq R$.
Since $x = z^{\deg(w)}w^{\deg(z)} = z^{b_1}w^{b_2}$,
the curve $\pi^{-1}(p)\cap X_{\mathcal{D}}$
has two irreducible components which
are vertical curves intersecting at the
fixed point $q\in Z$ of local coordinates
$z=w=0$. Thus $R/I$ defines the skyscraper
sheaf $\mathcal{O}_q$ and we get the first
exact sequence from
$\mathcal{F}|_{X_{\mathcal{D}}}
\cong \mathcal{O}_q$.
The sheaf $\pi^*\Omega_{Y}$
is locally generated by $dx=z^{b_1-1}w^{b_2-1}(b_1wdz+b_2zdw)$,
thus we have the desired isomorphism
\[
 \pi^*\Omega_{Y}\otimes_{\mathcal{O}_X}\mathcal{O}_X((b_1-1)E_1+(b_2-1)E_2)
 |_{X_{\mathcal{D}}}
 \to
 \ker(\alpha)|_{X_{\mathcal{D}}},
\]
where  for $i=1,2$, the divisor $E_i$ is the one
associated to the fraction $a_i/b_i$, as
explained in the beginning of this section.

{\em Case 2}.
$\Delta=[a_1,\infty)$.
In this case $\mathcal{D}$ defines an open
affine subset $X_{\mathcal{D}}$ of $X$ which
is the spectrum of the algebra
\[
 \bigoplus_{u\in\zz_{\ge 0}}
 \Gamma({\rm loc}(\mathcal{D}),\mathcal{O}(\mathcal{D}(u)))
 =
 S^{-1}\kk[x,x^{-a_1}\chi]
 \cong S^{-1}\kk[z,w] =: R,
\]
where $x$ and $S$ are defined in a similar
way as in the first case. 
Since $x = z$, the curve $\pi^{-1}(p)\cap X_{\mathcal{D}}$
has one irreducible component which
is a vertical curve intersecting $F^+$
at one point. Again we got an 
exact sequence as above and observe that
now $I=\langle w\rangle$. In this case
$R/I$ defines the sheaf $\mathcal{O}_{F^+}|_{X_{\mathcal{D}}}$
and we get the first exact sequence from
$\mathcal{F}|_{X_{\mathcal{D}}}
\cong \mathcal{O}_{F^+}|_{X_{\mathcal{D}}}$.
The sheaf $\pi^*\Omega_{Y}$
is locally generated by $dx=dz$,
thus we have an isomorphism
\[
 \pi^*\Omega_{Y}|_{X_{\mathcal{D}}}
 \to
 \ker(\alpha)|_{X_{\mathcal{D}}}.
\]

{\em Case 3}.
$\Delta=(-\infty,a_2]$. This is similar to the 
previous case and we omit the details.

{\em Case 4}.
$\Delta=\{a/b\}$.
In this case $\mathcal{D}$ defines an open
affine subset $X_{\mathcal{D}}$ of $X$ which
is the spectrum of the algebra
\[
 \bigoplus_{u\in\zz}
 \Gamma({\rm loc}(\mathcal{D}),\mathcal{O}(\mathcal{D}(u)))
 =
 S^{-1}\kk[x^k\chi^{-l},(x^{-a}\chi^{b})^{\pm 1}]
 \cong S^{-1}\kk[z,w^{\pm 1}] =: R,
\]
where $x$ and $S$ are defined in a similar
way as in the first case and $c,d$ are integers
such that $bk-la=1$.
Since $x = z^{\deg(w)}w^{\deg(z)} = z^{b}w^{l}$,
the curve $\pi^{-1}(p)\cap X_{\mathcal{D}}$
has an irreducible component which
is a vertical curve which has empty intersection
with $F^+\cup F^-\cup Z$.
Again we get an exact sequence as in the 
first case with $I=\langle z,w^{\pm 1}\rangle=R$. In this case
$R/I=0$ and we get the first exact sequence from
$\mathcal{F}|_{X_{\mathcal{D}}} = 0$.
The sheaf $\pi^*\Omega_{Y}$
is locally generated by $dx=z^{b-1}w^{l-1}(bwdz+lzdw)$,
thus we have an isomorphism (since $w$ is a unit
in this chart)
\[
 \pi^*\Omega_{Y}\otimes_{\mathcal{O}_X}\mathcal{O}_X((b-1)E)|_{X_{\mathcal{D}}}
 \to
 \ker(\alpha)|_{X_{\mathcal{D}}},
\]
where $E$ is the divisor associated to the fraction $a/b$.

\end{proof}
\vspace{4mm} 

We define the sheaf
$\mathcal{Q}_\alpha :=\Omega_X/\ker(\alpha).$

Now, maintaining the same hypothesis and notation as above, 
we are ready to prove part two of Theorem~\ref{PreEuler}.

\begin{lem}\label{Euler}
There is a short exact sequence of $\mathcal{O}_X$-modules
$$0\longrightarrow\mathcal{Q}_\alpha\longrightarrow\mathcal{G}\longrightarrow \mathbb{Z}^{r+1}\otimes\mathcal{O}_X\longrightarrow 0,$$
where $$\mathcal{G}=
\oo(-F^-)\oplus\oo(-F^+)\oplus\big(\ (\oplus_{i,j}\oo(-E_{i,j}))\,\big/
(\,\oplus_{i=1}^{m}\oplus_{j=1}^{s_i}\oo(-E_{i,j}-E_{i,j+1}))\ \big).$$
\end{lem}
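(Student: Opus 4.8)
The plan is to verify the sequence locally, chart by chart, on the same affine open sets $X_{\mathcal{D}}$ used in the proof of Lemma~\ref{PreEuler}; this suffices because exactness of a complex of coherent sheaves may be tested on stalks and these charts cover $X$. The essential input is that, by Lemma~\ref{PreEuler}, the sheaf $\mathcal{Q}_\alpha=\operatorname{im}(\alpha)$ is already understood on each chart: it fits in $0\to\mathcal{Q}_\alpha\to\oo_X\to\mathcal{F}_X\to 0$, so it is the maximal ideal $\mathcal{I}_q=(z,w)$ at a hyperbolic point $q$ (Case~1), it is $\oo_X(-F^+)$ resp. $\oo_X(-F^-)$ near the source resp. sink (Cases~2--3), and it is all of $\oo_X$ on the charts disjoint from $F^\pm\cup Z$ (Case~4). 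Throughout I write $\mathcal{H}$ for the third summand of $\mathcal{G}$, that is, the cokernel of the map $\bigoplus_{i,j}\oo(-E_{i,j}-E_{i,j+1})\to\bigoplus_{i,j}\oo(-E_{i,j})$ defining it.

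First I would build the surjection $\mathcal{G}\to\zz^{r+1}\otimes\oo_X$ in analogy with the toric Euler sequence, where the quotient is $\mathrm{Pic}(X)\otimes\oo_X$ and the middle arrow sends an invariant divisor to its class: here the target is the free sheaf attached to the lattice spanned by the symbols of the curves $E_{i,j},F^+,F^-$ modulo the relations carried by the subsheaf $\bigoplus\oo(-E_{i,j}-E_{i,j+1})$, and the map sends the local generator of a summand $\oo(-D)$ to $[D]$. The inclusion $\mathcal{Q}_\alpha\hookrightarrow\mathcal{G}$ I would define chart by chart: at a hyperbolic point $q=E_a\cap E_b$ through the Koszul presentation
\[
0\longrightarrow\oo(-E_a-E_b)\longrightarrow\oo(-E_a)\oplus\oo(-E_b)\longrightarrow\mathcal{I}_q\longrightarrow 0,
\]
which identifies $\mathcal{Q}_\alpha=\mathcal{I}_q$ with the part of $\mathcal{H}$ concentrated on the two curves through $q$; and near the source and sink through the tautological inclusions $\oo(-F^\pm)\hookrightarrow\oo_X$ into the first two summands of $\mathcal{G}$.

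The core of the proof is then the local exactness check in each of the four cases. At a hyperbolic point it is exactly the Koszul exactness displayed above: the two curves through $q$ account for $\mathcal{Q}_\alpha=\mathcal{I}_q$, while the remaining summands of $\mathcal{G}$ --- the copies $\oo(-F^\pm)$ and the summands of $\mathcal{H}$ coming from curves disjoint from $q$ --- are free near $q$ and map isomorphically onto the free quotient. Near $F^\pm$ (Cases~2--3) and on the orbit charts (Case~4) the three descriptions of $\mathcal{Q}_\alpha$ recalled above make exactness immediate, the free quotient again being cut out by the summands not meeting the relevant fixed locus.

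The step I expect to be the main obstacle is the global consistency along a chain $E_{i,1},\dots,E_{i,s_i}$ of special curves, where an interior curve $E_{i,j}$ is shared by the two adjacent hyperbolic points $q_{j-1}$ and $q_j$. One must arrange the relation subsheaf $\bigoplus\oo(-E_{i,j}-E_{i,j+1})$ and the embedding $\mathcal{Q}_\alpha\hookrightarrow\mathcal{G}$ so that $\mathcal{G}$ stays torsion free across such a shared curve and so that the quotient is a genuinely trivial bundle of the asserted rank $r+1$; equivalently, the delicate point is to verify exactness of the complex at these shared curves and to compute the rank of the free quotient correctly. Once this is settled for one chain, the four cases exhaust the charts of $X$ and the local isomorphisms glue, yielding the claimed short exact sequence.
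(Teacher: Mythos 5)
Your proposal has a genuine gap, and it sits exactly where you place it yourself: the ``global consistency along a chain'' is not a residual verification that may be deferred --- it is the whole content of the lemma, and your outline gives no argument for it. Concretely, you define the inclusion $\mathcal{Q}_\alpha\hookrightarrow\mathcal{G}$ chart by chart, via the Koszul presentation at each hyperbolic point and the tautological inclusions near $F^\pm$; but an interior curve $E_{i,j}$ lives on Case~4 charts linking two hyperbolic points $q_{j-1}$ and $q_j$, and on those overlaps the two Koszul identifications of the summand $\oo(-E_{i,j})$ must be shown to agree inside the quotient $\mathcal{H}$ --- a compatibility you never check, so no global morphism is actually constructed. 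Relatedly, your local picture at a hyperbolic point $q$ (``the remaining summands of $\mathcal{H}$ are free near $q$ and map isomorphically onto the free quotient'') fails for the chain quotient as you read it: near $q$ the relation tying $E_{i,j}$ to its \emph{other} neighbour does not split off freely but identifies the generator of $\oo(-E_{i,j})$ with a unit multiple of a neighbouring generator; moreover, with one summand per curve and one relation per adjacent pair the third summand of $\mathcal{G}$ has rank $\sum_i s_i-\sum_i(s_i-1)=m$, not $r$, so the asserted free quotient $\zz^{r+1}\otimes\oo_X$ cannot come out right by rank count. The sheaf the argument really needs is the direct sum, over the $r$ hyperbolic points $p=E_i\cap E_j$, of $\bigl(\oo(-E_i)\oplus\oo(-E_j)\bigr)/\oo(-E_i-E_j)$, an interior curve contributing one summand for \emph{each} of its two nodes; your degree-style description of the surjection $\mathcal{G}\to\zz^{r+1}\otimes\oo_X$ (``send the generator of $\oo(-D)$ to $[D]$'') is likewise too vague to support an exactness check against this object.

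The paper sidesteps every one of these difficulties by never defining anything chart by chart. It assembles a $3\times 3$ commutative diagram whose top row is $0\to\mathcal{Q}_\alpha\to\oo_X\to\mathcal{F}_X\to 0$, obtained from Lemma~\ref{PreEuler}; whose middle row is the direct sum of the $r+2$ canonical short exact sequences, namely $0\to\oo(-F^\pm)\to\oo_X\to\oo_{F^\pm}\to 0$ and, for each $p\in Z$, the Beauville four-term sequence with its first two terms replaced by their quotient; and whose middle column is $0\to\zz\to\zz^{r+2}\to\zz^{r+1}\to 0$, with first map $(1,\dots,1)$, tensored with $\oo_X$. Both maps of the desired sequence then arise as induced maps on the left column of the diagram --- all the horizontal maps into $\oo_X$ and $\mathcal{F}_X$ are canonical restrictions, so commutativity of the two squares is immediate and the nine-lemma-style chase produces the exact left column with nothing to glue. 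If you wish to rescue your local strategy, you must first exhibit a single globally defined map $\mathcal{Q}_\alpha\to\mathcal{G}$ (for instance exactly the induced-kernel map above) and only afterwards verify exactness on your four chart types; as written, the proposal is a plan whose hardest step is left open.
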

\begin{proof}
Let us consider the diagram
\[
\xymatrix@R=3mm@C=3mm
{&&0\ar@{.>}[dd]&&0\ar[dd]&&0\ar[dd]&&\\
&&&&&&&&\\
0\ar[rr]&&\mathcal{Q}_\alpha\ar[rr]\ar@{.>}[dd]
&&\mathcal{O}_X\ar[rr]\ar[dd]&&\mathcal{F}_X\ar[rr]\ar@{=}[dd]&&0\\
&&&&&&&&\\
0\ar[rr]&&\mathcal{G}\ar[rr]\ar@{.>}[dd]
&&\mathbb{Z}^{r+2}\otimes\oo_X\ar[rr]\ar[dd]&&\mathcal{F}_X\ar[rr]\ar[dd]&&0\\
&&&&&&&&\\
0\ar[rr]&&\mathbb{Z}^{r+1}\otimes\oo_X\ar@{=}[rr]\ar@{.>}[dd]&&\mathbb{Z}^{r+1}\otimes\oo_X\ar[rr]\ar[dd]&&0&&\\
&&&&&&&&\\
&&0&&0&&&&}
\]
where the top row comes from Lemma~\ref{PreEuler}.
The middle columns is the direct sum of the fundamental short exact sequences
$$0\longrightarrow \oo(-F^\pm)\longrightarrow \oo_X\longrightarrow \oo_{F^\pm} \longrightarrow 0$$
together with the following exact sequences (cf. \cite{Be}) for each
$E_i\cap E_j=p\in Z$
$$0\longrightarrow\oo(-E_{i}-E_{j})\longrightarrow\oo(-E_{i})\oplus\oo(-E_{j})\longrightarrow\oo_X\longrightarrow\oo_{p}\longrightarrow 0$$
where we replace the first two sheaves with their quotient to obtain short sequences.\\
The middle column is simply the exact sequence of modules
\[
\xymatrix@R=12pt@C=10pt
{0\ar[rr]&&\mathbb{Z}\ar^-{\tiny (1,\ldots,1)}[rr]&&\mathbb{Z}^{r+2}\ar[rr]&&\mathbb{Z}^{r+1}\ar[rr]&&0}
\]
after tensoring by $\oo_X$.
The exactness of the sequences, together with
the commutativity of both squares (easy to check),
ensures the existence of an exact sequence
on the left column.
\end{proof}

We can now prove the first theorem stated in the introduction.

\begin{proof}[Proof of Theorem \ref{main1}]
It is direct from Lemmas \ref{PreEuler} and \ref{Euler} since ${\rm im}(\alpha)$
in $\oo_X$ is isomorphic to $\mathcal{Q}_\alpha$.
\end{proof}

\begin{prop}
The following holds: $\ext^1(\mathcal{Q}_\alpha,\mathcal{O}_X)
\cong \mathcal{O}_Z$.
\end{prop}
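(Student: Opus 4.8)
The plan is to compute the sheaf $\ext^1(\mathcal{Q}_\alpha,\mathcal{O}_X)$ locally, using the chart-by-chart description of $\mathcal{Q}_\alpha$ that is implicit in the proof of Lemma~\ref{PreEuler}. Recall that $\mathcal{Q}_\alpha=\Omega_X/\ker(\alpha)$ is isomorphic to the image of $\alpha$ inside $\oo_X$, which is the ideal sheaf generated locally by $\deg(z)z$ and $\deg(w)w$. The key observation is that $\ext^1(-,\oo_X)$ vanishes on locally free sheaves, so the computation is purely local and concentrated exactly where $\mathcal{Q}_\alpha$ fails to be a line bundle. In Cases~2,~3,~4 of Lemma~\ref{PreEuler} the ideal $\imath=\mathrm{im}(\alpha)$ is principal (generated by $w$, or $z$, or is all of $R$), hence $\mathcal{Q}_\alpha$ is locally free there and contributes nothing to the $\ext^1$. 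The only interesting chart is Case~1, near a hyperbolic fixed point $q\in Z$, where the image ideal is $I=\langle \deg(z)z,\deg(w)w\rangle=\langle z,w\rangle\subseteq R$, the maximal ideal of the point $q$.

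First I would reduce to the local ring $R\cong S^{-1}\kk[z,w]$ at such a point $q$ and compute $\ext^1_R(I,R)$ where $I=(z,w)$ is the maximal ideal of a smooth surface point. The standard tool is the Koszul resolution of $I$: from the short exact sequence
\[
 0\longrightarrow R\xrightarrow{\ (w,-z)\ } R^2\xrightarrow{\ (z,w)\ } I\longrightarrow 0
\]
one applies $\mathrm{Hom}_R(-,R)$ and reads off $\ext^1_R(I,R)$ as the cokernel of the dual map $R^2\to R$ given by $(z,w)$, namely $R/(z,w)=R/I$. This identifies the stalk of $\ext^1(\mathcal{Q}_\alpha,\oo_X)$ at $q$ with the residue field, i.e. with the stalk of $\oo_q$, and shows it is supported precisely at the hyperbolic fixed points. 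Summing over all $q\in Z$ yields $\ext^1(\mathcal{Q}_\alpha,\oo_X)\cong\oo_Z$.

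The steps in order are therefore: (1) invoke that $\ext^1(-,\oo_X)$ is local and vanishes where the sheaf is locally free, reducing to the charts of Lemma~\ref{PreEuler}; (2) dispatch Cases~2,~3,~4 by noting $\mathcal{Q}_\alpha$ is a line bundle there (the defining ideal is principal or the unit ideal); (3) in Case~1 exhibit the Koszul resolution of the maximal ideal $(z,w)$ and compute $\ext^1_R((z,w),R)\cong R/(z,w)$; (4) globalize, observing that the local contributions are exactly the skyscrapers at the points of $Z$, which assemble to $\oo_Z$. I expect the main obstacle to be bookkeeping the identification $\mathcal{Q}_\alpha\cong\mathrm{im}(\alpha)$ carefully enough that the local ideal really is the full maximal ideal $(z,w)$ rather than some proper subideal; this hinges on the fact that in Case~1 both $\deg(z)$ and $\deg(w)$ are nonzero, which holds because $q$ is hyperbolic and the two vertical components meeting at $q$ carry opposite-sign weights in the $\zz$-grading. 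Once that sign/nonvanishing point is secured, the Koszul computation is routine and the gluing is formal.
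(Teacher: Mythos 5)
Your proof is correct, but it takes a genuinely different route from the paper's. The paper argues globally: Lemma~\ref{PreEuler} yields the short exact sequence $0\to\mathcal{Q}_\alpha\to\oo_X\to\mathcal{F}_X\to 0$, and since $\ext^i(\oo_X,\oo_X)=0$ for $i>0$ this dimension-shifts to $\ext^1(\mathcal{Q}_\alpha,\oo_X)\cong\ext^2(\mathcal{F}_X,\oo_X)$; then $\ext^2(\oo_{F^\pm},\oo_X)=0$ is obtained from the resolution $0\to\oo_X(-F^\pm)\to\oo_X\to\oo_{F^\pm}\to 0$ together with \cite[Pro. III.6.7]{Har}, while $\ext^2(\oo_p,\oo_X)\cong\oo_p$ is extracted from the four-term sequence $0\to\oo(-E_i-E_j)\to\oo(-E_i)\oplus\oo(-E_j)\to\oo_X\to\oo_p\to 0$, which is precisely a global Koszul resolution of the point $p$ by line bundles. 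You localize first instead: identifying $\mathcal{Q}_\alpha$ with the image ideal sheaf, you check on the charts of Lemma~\ref{PreEuler} that it is invertible away from $Z$ (Cases 2--4, where the ideal is principal or all of $R$), and at a hyperbolic point you resolve the maximal ideal by the local Koszul complex, getting $\ext^1_R((z,w),R)\cong R/(z,w)$. The two computations are Koszul at heart --- yours is the local shadow of the paper's global one. What your route buys: it makes completely transparent why the answer is supported exactly on $Z$, and it needs only the compatibility of $\ext$ with stalks (\cite[Pro. III.6.8]{Har}) and the vanishing of $\ext^1$ on locally free sheaves, with no dimension shift through $\mathcal{F}_X$. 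What it costs: it relies on the internal chart descriptions from the \emph{proof} of Lemma~\ref{PreEuler}, not just its statement --- in particular that in Case 1 both $\deg z=-b_2$ and $\deg w=b_1$ are nonzero, so that in characteristic zero $\langle\deg(z)z,\deg(w)w\rangle$ is the full maximal ideal $(z,w)$, exactly the point you rightly flag as the crux --- whereas the paper's argument uses only the four-term sequence of the lemma plus standard global resolutions. Your final gluing step is indeed formal: a coherent sheaf supported on finitely many reduced closed points with residue-field stalks is canonically the direct sum of the skyscrapers, hence $\oo_Z$. (One cosmetic slip: you write $\imath=\mathrm{im}(\alpha)$, but in the paper $\imath$ denotes the injection in Lemma~\ref{PreEuler}; this does not affect the argument.)
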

\begin{proof}
By the definition of $\mathcal{Q}_\alpha$,
Lemma~\ref{PreEuler} and the long
exact sequence for ext sheaves we have
$\ext^1(\mathcal{Q}_\alpha,\mathcal{O}_X)
\cong \ext^2(\mathcal{F},\mathcal{O}_X)$.
Since the functor $\ext^i$ commutes with
finite direct sums, it is enough to show 
that $\ext^2(\mathcal{O}_{F^{\pm}},\mathcal{O}_X)=0$
and $\ext^2(\mathcal{O}_{p_i},\mathcal{O}_X)\cong
\mathcal{O}_{p}$ for any $p\in Z$.
Taking the long exact $\ext$-sequence 
of the exact sequence of sheaves
\[
 \xymatrix{
  0\ar[r] & 
  \mathcal{O}_X(-F^{\pm})\ar[r] &
  \mathcal{O}_X\ar[r] &
  \mathcal{O}_{F^{\pm}}\ar[r] & 
  0 
 }
\]
and using $\ext^i(\mathcal{O}_X,\mathcal{O}_X)=0$
for any $i>0$, by~\cite[Pro. III.6.3(b)]{Har},
we get $\ext^1(\mathcal{O}_X(-F^{\pm}),\mathcal{O}_X)
\cong \ext^2(\mathcal{O}_{F^{\pm}},\mathcal{O}_X)$. 
By~\cite[Pro. III.6.7]{Har} we conclude that the
these sheaves are the zero sheaf, proving the first
vanishing. To prove the second isomorphism
observe that for each $p\in Z$ lying in
the intersection $E_i\cap E_j$ we have
the following exact sequence of sheaves
~\cite{Be}
\[
 \xymatrix@1{
  0\ar[r] & 
  \mathcal{O}_X(-E_{i}-E_{j})\ar[r] &
  \mathcal{O}_X(-E_{i})\oplus\mathcal{O}_X(-E_{j})\ar[r]^-\varphi &
  \mathcal{O}_X\ar[r] &
  \mathcal{O}_{p}\ar[r] &
  0.
 }
\]
Denoting by $\mathcal{N}$ the quotient
sheaf $\mathcal{O}_X(-E_{i})\oplus\mathcal{O}_X(-E_{j})/\mathcal{O}_X(-E_{i}-E_{j})$
we deduce $\ext^1(\mathcal{N},\mathcal{O}_X)
\cong\ext^2(\mathcal{O}_p,\mathcal{O}_X)$
and the fact that $\ext^1(\mathcal{N},\mathcal{O}_X)$
is the cokernel of the map
$\mathcal{O}_X(E_{i})\oplus\mathcal{O}_X(E_{j})
\to \mathcal{O}_X(E_{i}+E_{j})$
induced by $\varphi$ taking tensor 
product with $\mathcal{O}_X(E_{i}+E_{j})$.
This proves the statement.
\end{proof}

\section{Aplications}
\label{sec:3}

\begin{lem}
\label{blow-up}
Let $\varphi\colon \tilde X\to X$ be the blow-up
of a smooth projective variety at a point $p\in X$.
Then $h^1(\tilde X,T_{\tilde X})\geq h^1(X,T_X)$.
\end{lem}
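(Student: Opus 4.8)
The plan is to route the comparison through the pushforward sheaf $\varphi_* T_{\tilde X}$ on $X$, showing that it lies one cohomological step below $T_{\tilde X}$ via the Leray spectral sequence and one step above $T_X$ via a skyscraper quotient. First I would study the differential $d\varphi\colon T_{\tilde X}\to\varphi^*T_X$. Since $\varphi$ is an isomorphism over $X\setminus\{p\}$ and $T_{\tilde X}$ is locally free, hence torsion-free, the map $d\varphi$ is injective (its kernel is a torsion subsheaf of a torsion-free sheaf). Applying $\varphi_*$ and using the projection formula together with $\varphi_*\oo_{\tilde X}=\oo_X$, one gets $\varphi_*\varphi^*T_X\cong T_X$, and therefore an injection $\varphi_*T_{\tilde X}\hookrightarrow T_X$. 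Because $\varphi$ restricts to an isomorphism away from $p$, this injection is an isomorphism over $X\setminus\{p\}$, so its cokernel $\mathcal C$ is a coherent sheaf supported at the single point $p$, hence of finite length. (A local computation in the blow-up charts identifies $\varphi_*T_{\tilde X}$ with the subsheaf $\mathcal I_p\otimes T_X$ of vector fields vanishing at $p$, so that $\mathcal C\cong T_X\otimes\kappa(p)$; this precise description is not needed for the inequality, only that $\mathcal C$ is a finite-length sheaf at $p$.)

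With the short exact sequence
\[
0\longrightarrow \varphi_*T_{\tilde X}\longrightarrow T_X\longrightarrow \mathcal C\longrightarrow 0
\]
in hand, and $\mathcal C$ of finite length so that $H^1(X,\mathcal C)=0$, the long exact cohomology sequence yields a surjection $H^1(X,\varphi_*T_{\tilde X})\twoheadrightarrow H^1(X,T_X)$, and hence $h^1(X,\varphi_*T_{\tilde X})\geq h^1(X,T_X)$. On the other side, the five-term exact sequence attached to the Leray spectral sequence $H^p(X,R^q\varphi_*T_{\tilde X})\Rightarrow H^{p+q}(\tilde X,T_{\tilde X})$ begins with the injection $H^1(X,\varphi_*T_{\tilde X})\hookrightarrow H^1(\tilde X,T_{\tilde X})$, giving $h^1(\tilde X,T_{\tilde X})\geq h^1(X,\varphi_*T_{\tilde X})$. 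Chaining the two inequalities proves the statement:
\[
h^1(\tilde X,T_{\tilde X})\;\geq\; h^1(X,\varphi_*T_{\tilde X})\;\geq\; h^1(X,T_X).
\]

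The main obstacle is the first step: establishing that the natural map $\varphi_*T_{\tilde X}\to T_X$ is an injection whose cokernel is supported only at $p$ (equivalently, that the pushforward of the tangent sheaf of the blow-up is a finite-colength subsheaf of $T_X$). This rests on the projection-formula identification $\varphi_*\varphi^*T_X\cong T_X$ and on the torsion-freeness argument for injectivity; once the cokernel is known to be a finite-length sheaf, the two cohomological comparisons are formal. Everything else, including the edge map of the Leray sequence and the vanishing $H^1(X,\mathcal C)=0$ for a sheaf supported at a point, is routine.
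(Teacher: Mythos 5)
Your proof is correct, and it shares the paper's skeleton --- both arguments route the comparison through $\varphi_*T_{\tilde X}$ and a short exact sequence $0\to\varphi_*T_{\tilde X}\to T_X\to(\text{skyscraper})\to 0$ --- but you treat the step $h^1(\tilde X,T_{\tilde X})$ versus $h^1(X,\varphi_*T_{\tilde X})$ in a genuinely more elementary way. The paper invokes the vanishing $R^i\varphi_*T_{\tilde X}=0$ for $i>0$ (a real computation about blow-ups, e.g.\ via the theorem on formal functions and the restriction of $T_{\tilde X}$ to the exceptional $\pp^{n-1}$), which by \cite[Exercise III.8.1]{Har} degenerates the Leray spectral sequence and yields the \emph{equalities} $h^i(\tilde X,T_{\tilde X})=h^i(X,\varphi_*T_{\tilde X})$ for all $i$. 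You use only the edge injection $H^1(X,\varphi_*T_{\tilde X})\hookrightarrow H^1(\tilde X,T_{\tilde X})$ from the five-term exact sequence, which holds unconditionally for any morphism and any coherent sheaf; since the lemma claims only an inequality, this suffices and removes the one nontrivial input of the paper's proof. Likewise, where the paper asserts the precise cokernel $T_p$ (i.e.\ that $\varphi_*T_{\tilde X}$ is exactly the sheaf of vector fields vanishing at $p$, which again requires a chart computation), you get injectivity of $\varphi_*T_{\tilde X}\to T_X$ formally from torsion-freeness of $T_{\tilde X}$, the projection formula and $\varphi_*\oo_{\tilde X}=\oo_X$, and you need only that the cokernel is a finite-length sheaf supported at $p$, which follows from $\varphi$ being an isomorphism off $p$; your parenthetical identification $\mathcal C\cong T_X\otimes\kappa(p)$ is correctly flagged as unnecessary. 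The trade-off: your argument is self-contained and applies verbatim to any proper birational morphism of smooth projective varieties whose non-isomorphism locus is a finite set of points, while the paper's stronger input gives equality of all the $h^i$ --- more than the lemma needs.
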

\begin{proof}
Since $\varphi$ is a blow-up it follows that
$R^i\varphi_* T_{\tilde X}$ vanishes for any $i>0$.
Thus the equality $h^i(\tilde X,T_{\tilde X})
= h^i(X,\varphi_* T_{\tilde X})$ holds for any $i$
by~\cite[Exercise III.8.1]{Har} and we conclude 
by the following exact sequence of sheaves
\[
 \xymatrix@1
 {
  0\ar[r] & \varphi_*T_{\tilde X}\ar[r]
  & T_X\ar[r] & T_p\ar[r] & 0.
 }
\]
\end{proof}

\begin{proof}[Proof of Theorem \ref{main2}]
We begin by showing $(1)\Rightarrow (2)$.
Consider the good quotient map $\pi\colon X\rightarrow Y$.
Assume first that the curve $Y$ has positive genus. If
$\pi$ has only irreducible fibers,
$X$ is a ruled surface so by \cite[Theorem 4]{Se}
we have $h^1(X,T_X)>0$. This still holds if
there are reducible fibers, by Lemma~\ref{blow-up}, 
because $X$ would be
a blow-up of one of such ruled surfaces. Thus, $Y$
must necessarily be rational.

We show now that $X$ contains no invariant rational curves $C$ with $C^2=-n\leq -2$.
Suppose such a curve exists.
From Lemma \ref{PreEuler}, after tensoring by $\oo(K_X+C)$, we have an exact sequence
$$0\rightarrow \pi^*(\Omega_\pp^1)\otimes\oo(E_\Ss+K_X+C)\rightarrow \Omega_X(K_X+C)
\rightarrow {\rm im}(\alpha)\otimes\oo(K_X+C) \rightarrow 0.$$
Let us compute some cohomology groups for these sheaves.
Assume that $K_X+C$ is linearly equivalent to an effective divisor. From the genus formula we have
$$(K_X+C)\cdot C = 2{\rm g}(C) - 2 = -2 < 0,$$
so by applying~\cite[Proposition V.1.1.2]{Libro} we see that $C$ must be in the base locus of
$|K_X+C|$, meaning $K_X+C \sim C + E'$ for some effective divisor $E'$.
This would imply that $K_X$ is linearly equivalent to an effective
divisor, a contradiction because $X$ is rational and smooth. 
Thus $h^0(X,K_X+C)=0$.
Since ${\rm im}(\alpha)\otimes\oo(C+K)$ injects into $\oo(C+K)$, then also
$$h^0(X,{\rm im}(\alpha)\otimes\oo(C+K))=0.$$

If $F$ is a general fiber of $\pi$, the genus formula yields $F\cdot K_X= -2$. The product
$F\cdot C$ equals at most 1 (where the equality holds
if $C$ is the source or sink curve), so
$$F\cdot (-2F+E_\Ss +K_X+C)=F\cdot K_X + F\cdot C<0.$$
Then $h^0(X,\pi^*(\Omega_\pp^1)\otimes\oo(E_\Ss+K_X+C))=h^0(X,-2F+E_\Ss +K_X+C)=0.$
Going back to the exact sequence, we can now deduce that $h^0(X,\Omega_X(K_X+C))=0,$ and due to
Serre's duality we conclude $h^2(X,T_X(-C))=0$.
\vspace{3mm}

Consider now the exact sequence of sheaves
$$0\longrightarrow \oo_X(-C)\longrightarrow \oo_X\longrightarrow \oo_C\longrightarrow 0.$$
Tensoring by $T_X$ gives a new exact sequence
$$0\longrightarrow T_X(-C)\longrightarrow T_X\longrightarrow T_X|_C\longrightarrow 0.$$
From the vanishing at $H^2$ shown above, there is a surjection $H^1(X,T_X)\rightarrow H^1(X,T_X|_C)$, so
it suffices to show that  $h^1(X,T_X|_C)\neq 0$ to prove the non-existence of this curve $C$, but
this comes directly from the exact sequence
$$0\longrightarrow T_C\longrightarrow T_X|_C\longrightarrow N_{C|X}\longrightarrow 0$$
and the fact that $h^1(X,T_C)=h^2(X,T_C)=0$ and $h^1(X,N_{C|X})=n-1$.

We showed that any invariant rational curve of
$X$ has self-intersection $\geq -1$. Since the 
classes of these curves generate the Mori cone 
of $X$ (see~\cite{Libro}) we conclude that $-K_X$
is ample and thus $X$ is del Pezzo.
Moreover by~\cite[Proposition 5.9]{Hu}
del Pezzo $\kk^*$-surfaces without elliptic
fixed points are toric.

The proof of $(2)\Rightarrow (1)$ is a consequence
of~\cite[Corollary 2.8]{Il}.

\end{proof}

\begin{bibdiv}
\begin{biblist}

\bib{AH}{article}{
   author={Altmann, Klaus},
   author={Hausen, J{\"u}rgen},
   title={Polyhedral divisors and algebraic torus actions},
   journal={Math. Ann.},
   volume={334},
   date={2006},
   number={3},
   pages={557--607},
   issn={0025-5831},
   review={\MR{2207875 (2006m:14062)}},
   doi={10.1007/s00208-005-0705-8},
}

\bib{AHS}{article}{
   author={Altmann, Klaus},
   author={Hausen, J{\"u}rgen},
   author={S{\"u}ss, Hendrik},
   title={Gluing affine torus actions via divisorial fans},
   journal={Transform. Groups},
   volume={13},
   date={2008},
   number={2},
   pages={215--242},
   issn={1083-4362},
   review={\MR{2426131 (2009f:14107)}},
   doi={10.1007/s00031-008-9011-3},
}

\bib{Libro}{article}{
    AUTHOR = {Arzhantsev, Ivan},
    AUTHOR = {Derenthal, Ulrich},
    AUTHOR = {Hausen, J\"urgen},
    AUTHOR = {Laface, Antonio},
     TITLE = {Cox rings},
      YEAR = {2013},
   JOURNAL = {arXiv:1003.4229}
    EPRINT = {http://www.mathematik.uni-tuebingen.de/~hausen/CoxRings/download.php?name=coxrings.pdf},
}

\bib{Be}{book}{
   author={Beauville, Arnaud},
   title={Complex algebraic surfaces},
   series={London Mathematical Society Student Texts},
   volume={34},
   edition={2},
   note={Translated from the 1978 French original by R. Barlow, with
   assistance from N. I. Shepherd-Barron and M. Reid},
   publisher={Cambridge University Press},
   place={Cambridge},
   date={1996},
   pages={x+132},
   isbn={0-521-49510-5},
   isbn={0-521-49842-2},
   review={\MR{1406314 (97e:14045)}},
   doi={10.1017/CBO9780511623936},
}

\bib{CLS}{book}{
   author={Cox, David A.},
   author={Little, John B.},
   author={Schenck, Henry K.},
   title={Toric varieties},
   series={Graduate Studies in Mathematics},
   volume={124},
   publisher={American Mathematical Society},
   place={Providence, RI},
   date={2011},
   pages={xxiv+841},
   isbn={978-0-8218-4819-7},
   review={\MR{2810322 (2012g:14094)}},
}

\bib{Har}{book}{
   author={Hartshorne, Robin},
   title={Algebraic geometry},
   note={Graduate Texts in Mathematics, No. 52},
   publisher={Springer-Verlag},
   place={New York},
   date={1977},
   pages={xvi+496},
   isbn={0-387-90244-9},
   review={\MR{0463157 (57 \#3116)}},
}

\bib{Hu}{thesis}{
    author={Huggenberger, Elaine},
    title={Fano Varieties with Torus Action of Complexity One},
    date={2013},
    type={PhD thesis},
    eprint={https://publikationen.uni-tuebingen.de/xmlui/handle/10900/49921},
}

\bib{Il}{article}{
   author={Ilten, Nathan Owen},
   title={Deformations of smooth toric surfaces},
   journal={Manuscripta Math.},
   volume={134},
   date={2011},
   number={1-2},
   pages={123--137},
   issn={0025-2611},
   review={\MR{2745256 (2012c:14016)}},
   doi={10.1007/s00229-010-0386-9},
}

\bib{IS}{article}{
   author={Ilten, Nathan Owen},
   author={S{\"u}ss, Hendrik},
   title={Polarized complexity-1 $T$-varieties},
   journal={Michigan Math. J.},
   volume={60},
   date={2011},
   number={3},
   pages={561--578},
   issn={0026-2285},
   review={\MR{2861089}},
   doi={10.1307/mmj/1320763049},
}

\bib{OW}{article}{
   author={Orlik, P.},
   author={Wagreich, P.},
   title={Algebraic surfaces with $k\sp*$-action},
   journal={Acta Math.},
   volume={138},
   date={1977},
   number={1-2},
   pages={43--81},
   issn={0001-5962},
   review={\MR{0460342 (57 \#336)}},
}

\bib{PS}{article}{
   author={Petersen, Lars},
   author={S{\"u}ss, Hendrik},
   title={Torus invariant divisors},
   journal={Israel J. Math.},
   volume={182},
   date={2011},
   pages={481--504},
   issn={0021-2172},
   review={\MR{2783981 (2012d:14083)}},
   doi={10.1007/s11856-011-0039-z},
}

\bib{Se}{article}{
   author={Seiler, Wolfgang K.},
   title={Deformations of ruled surfaces},
   journal={J. Reine Angew. Math.},
   volume={426},
   date={1992},
   pages={203--219},
   issn={0075-4102},
   review={\MR{1155754 (93b:14063)}},
   doi={10.1515/crll.1992.426.203},
}

\bib{Sus}{article}{
AUTHOR = {S\"u\ss, Hendrik},
TITLE={Canonical divisors on T-varieties},
JOURNAL = {arXiv:0811.0626},
year={2010},
    EPRINT = {http://arxiv.org/abs/0811.0626},
}

\end{biblist}
\end{bibdiv}
\end{document}